 \newtheorem{theorem}{Theorem}[section]
\newtheorem{lemma}[theorem]{Lemma}
\newtheorem{definition}[theorem]{Definition}
\newtheorem{proposition}[theorem]{Proposition}
\newtheorem{corollary}[theorem]{Corollary}
\newtheorem{remark}[theorem]{Remark}
\newtheorem{observation}[theorem]{Observation}
\newcommand{\beqa}{\begin{eqnarray*}}
\newcommand{\eeqa}{\end{eqnarray*}}
\newcommand{\beqn}{\begin{eqnarray}}
\newcommand{\eeqn}{\end{eqnarray}}
\newcommand{\e}{\varepsilon}
\newcommand{\del}{\delta}
\newcommand{\ds}{\displaystyle}
\newcommand{\N}{\mathbb N}
\newcounter{cnt1}
\newcounter{cnt2}
\newcounter{cnt3}
\newcounter{cnt4}
\newcommand{\blr}{\begin{list}{$($\roman{cnt1}$)$}
{\usecounter{cnt1} \setlength{\topsep}{0pt}
\setlength{\itemsep}{0pt}}}
\newcommand{\bla}{\begin{list}{$($\alph{cnt2}$)$}
{\usecounter{cnt2} \setlength{\topsep}{0pt}
\setlength{\itemsep}{0pt}}}
\newcommand{\bln}{\begin{list}{$($\arabic{cnt3}$)$}
{\usecounter{cnt3} \setlength{\topsep}{0pt}
\setlength{\itemsep}{0pt}}}
\newcommand{\blR}{\begin{list}{$($\Roman{cnt4}$)$}
{\usecounter{cnt4} \setlength{\topsep}{0pt}
\setlength{\itemsep}{0pt}}}
\newcommand{\el}{\end{list}}
\begin{document}

\title[More on MIP and UMIP]{More on the (Uniform) Mazur Intersection Property}

\author[D. Gothwal]{Deepak Gothwal}

\address{Stat--Math Division, Indian Statistical Institute, 203, B.~T. Road, Kolkata
700108, India.}
\email{deepakgothwal190496@gmail.com}

\subjclass{46B20}

\keywords{Uniform w*-denting, Uniform convexity, Modulus of denting and semidenting points}

\date{\today}

\begin{abstract}
In this paper, we introduce two moduli of w*-semidenting points and characterise the Mazur Intersection Property (MIP) and the Uniform MIP (UMIP) in terms of these moduli.

We show that a property slightly stronger than UMIP already implies uniform convexity of the dual. This may lead to a possible approach towards answering the long standing open question whether the UMIP implies the existence of an equivalent uniformly convex renorming.

We also obtain the condition for stability of the UMIP under $\ell_p$-sums.
\end{abstract}

\maketitle

\section{Introduction}

The Mazur Intersection Property (MIP) has been an extensive area of study. Defined as:
\begin{quote} \em
A Banach space $X$ is said to have the MIP if every closed bounded convex set in $X$ is the intersection of the closed balls containing it,
\end{quote}
the property brings more features to the structure of the Banach space. For instance, Giles, Gregory and Sims in \cite{GGS} showed that $X$ has the MIP if and only if the set of the w*-denting points of the unit ball $B(X^*)$ of $X^*$ is dense in the unit sphere $S(X^*)$. This initiated a great lead to a lot of analysis on the behaviour of a Banach space with the MIP.

But the uniform version of the MIP (UMIP) didn't get much attention since its introduction by Whitfield and Zizler in \cite{WZ1}. It was defined as:
\begin{quote} \em
$X$ has the UMIP if for every $\e >0$, there exists $K >0$ such that for every closed bounded and convex set $C \subseteq X$ and $p \in X$ with $diam(C) <1/\e$ and $d(p, C) \geq \e$, there is a closed ball $B[x_0, r_0]$ in $X$ such that $C \subseteq B[x_0, r_0]$, $d(p, B[x_0, r_0]) \geq \e/2$ and $r_0 \leq K$.
\end{quote}

In our earlier paper \cite{BGG}, we characterised the UMIP in terms of uniform w*-semidenting points, thus filling a long felt gap. In this paper, we continue our investigation of Banach spaces with the UMIP. We introduce two moduli of w*-semidenting points and characterise the MIP and UMIP in terms of these moduli.

One of the moduli is motivated by the modulus of denting points discussed by Dutta and Lin in \cite{DL}. They have already observed that their notion of uniform denting implies the existence of an equivalent uniformly convex renorming. We improve this result and show that $X^*$ is uniformly w*-denting actually implies $X^*$ is uniformly convex. Using this, we introduce a property slightly stronger than the UMIP (uniform version of the property ``$\mathcal{P} = \mathcal{H}$'' discussed in \cite{Gi}) which characterises the uniform convexity of $X^*$. This may lead to a possible approach towards answering the long standing open question whether the UMIP implies the existence of an equivalent uniformly convex renorming of~$X^*$.

Using the second modulus, we obtain a condition for stability of the UMIP under $\ell_p$-sums.

\section{Notations and Preliminaries}

Throughout this article, $X$ is a real Banach space. For $x\in X$ and $r>0$, we denote by $B(x, r)$ \emph{the open ball} $\{y\in X : \|x-y\|<r\}$ and by $B[x, r]$ \emph{the closed ball} $\{y\in X: \|x-y\|\leq r\}$. We denote by $B(X)$ the \emph{closed unit ball} $\{x\in X : \|x\| \leq 1\}$ and by $S(X)$ the \emph{unit sphere} $\{x \in X : \|x\| = 1\}$.

For $x\in S(X)$, we denote by $D(x)$ the set $\{f\in S(X^*) :
f(x)=1\}$. Any selection of $D$ is called a support mapping.

\begin{definition} \rm
A \emph{slice} of $B(X)$ determined by $f\in S(X^*)$ is a set of the form
\[
S(B(X), f, \alpha) := \{x\in B(X) : f(x) > \alpha \}
\]
for some $0<\alpha < 1$.

For $x \in S(X)$, $S(B(X^*), x, \alpha)$ is called a w*-slice of $B(X^*)$.

We say that $x\in S(X)$ is a \emph{denting point} of $B(X)$ if
for every $\e > 0$, $x$ is contained in a slice of $B(X)$
of diameter less than $\e$.

A w*-denting point of $B(X^*)$ is defined similarly.
\end{definition}

\begin{remark} \rm
Note that a more common notation for a slice is
\[
S(B(X), f, \alpha) := \{x\in B(X) : f(x) > 1-\alpha \}.
\]
However, the above is more convenient in our context.
\end{remark}

\begin{definition} \rm \cite{BGG}
$f\in S(X^*)$ is said to be a \emph{w*-semidenting point} of $B(X^*)$ if for every $\e > 0$, there exist $0 < \alpha < 1$ and $x \in S(X)$ such that $S(B(X^*), x, \alpha) \subseteq B(f, \e)$.

A semidenting point of $B(X)$ can be defined similarly.
\end{definition}

\begin{theorem} \label{thm0} \bla
\item \cite{CL} A Banach space $X$ has the MIP if and only if every $f \in S(X^*)$ is w*-semidenting.
\item \cite{BGG} A Banach space $X$ has the UMIP if and only if every $f \in S(X^*)$ is uniformly w*-semidenting, i.e., given $\e > 0$, there exists $0 < \alpha < 1$ such that for any $f \in S(X^*)$, there exists $x \in S(X)$ such that
\[
S(B(X^*), x, \alpha) \subseteq B(f, \e).
\]
\el
\end{theorem}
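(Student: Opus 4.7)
The plan is to prove both parts by exploiting the correspondence between closed balls in $X$ and $w^*$-slices of $B(X^*)$ through polar duality and Hahn--Banach separation, reducing part (a) to the Giles--Gregory--Sims characterization of the MIP from \cite{GGS}: $X$ has the MIP if and only if the $w^*$-denting points of $B(X^*)$ are norm dense in $S(X^*)$.

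For part (a), the implication ``MIP $\Rightarrow$ $w^*$-semidenting'' is immediate from GGS: given $f \in S(X^*)$ and $\e > 0$, I would choose a $w^*$-denting point $g \in S(X^*)$ with $\|f - g\| < \e/2$ and a $w^*$-slice containing $g$ of diameter less than $\e/2$; that slice then lies inside $B(g, \e/2) \subseteq B(f, \e)$. For the converse, assuming every $f \in S(X^*)$ is $w^*$-semidenting, I would construct a $w^*$-denting point inside $B(f, \e)$ by iteratively refining $w^*$-slices $S_1 \supseteq S_2 \supseteq \cdots$ of $B(X^*)$ with $\mathrm{diam}(S_n) \to 0$, at each step reapplying the semidenting hypothesis to a suitably chosen interior point of the current slice so that the next one is both smaller and still nested inside. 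The norm intersection of these slices is a singleton $\{g\}$, and by construction $g$ is a $w^*$-denting point lying in $B(f, \e)$, yielding GGS. The main hurdle is keeping the refined slices nested inside the first while still being able to apply the hypothesis at each stage.

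For part (b) the same scheme runs quantitatively. To prove ``UMIP $\Rightarrow$ uniform $w^*$-semidenting'', given $\e > 0$ and $f \in S(X^*)$, I would choose a test pair $(C, p)$ with $C$ contained in a half-space of the form $\{x : f(x) \leq 0\}$ and with bounded diameter, and $p$ with $f(p)$ bounded below, set up so that the UMIP modulus $K = K(\e)$ produces a ball $B[x_0, r_0]$ with $r_0 \leq K$. Reading off the direction $x = x_0 / \|x_0\|$ and an appropriate level $\alpha$ from the geometry of this ball yields a $w^*$-slice $S(B(X^*), x, \alpha) \subseteq B(f, \e')$ with $\e'$ controlled by $\e$ and $K$; the uniformity of $K$ in $f$ forces a uniform $\alpha$. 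Conversely, for ``uniform $w^*$-semidenting $\Rightarrow$ UMIP'', given $C, p$ with $\mathrm{diam}(C) < 1/\e$ and $d(p, C) \geq \e$, Hahn--Banach supplies $f \in S(X^*)$ separating $p$ from $C$ with margin proportional to $\e$. Applying the uniform hypothesis to $f$ yields $x \in S(X)$ with a small slice near $f$, from which I would construct a ball $B[x_0, r_0]$ with $r_0$ controlled by $\alpha(\e)$ that contains $C$ but stays at distance at least $\e/2$ from $p$.

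The principal obstacle in both parts is the quantitative bookkeeping: in (a), controlling the nesting so that each slice remains inside the initial neighborhood of $f$; in (b), making the relations between the UMIP modulus $K$ and the semidenting modulus $\alpha$ explicit enough that uniformity is preserved in both directions.
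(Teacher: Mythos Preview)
The paper does not actually prove Theorem~\ref{thm0}; it is stated in the preliminaries as a known result, with part~(a) attributed to \cite{CL} and part~(b) to \cite{BGG}. So there is no ``paper's own proof'' to compare against, and your proposal is really an attempt to reconstruct the original arguments from those references.

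That said, a few comments on your plan. For the forward direction of~(a) your use of the Giles--Gregory--Sims density of $w^*$-denting points is fine. For the converse, however, routing through GGS is more circuitous than necessary and the nesting obstacle you flag is real: the semidenting hypothesis at a point $g$ only produces a slice lying in a ball \emph{around} $g$, not a slice \emph{containing} $g$, so arranging $S_{n+1} \subseteq S_n$ and ensuring the limit point is genuinely $w^*$-denting both require extra work you have not sketched. A cleaner route is the direct one you already use for~(b): given closed bounded convex $C$ and $p \notin C$, separate by $f \in S(X^*)$, apply the $w^*$-semidenting hypothesis to $f$ to get a small $w^*$-slice $S(B(X^*), x, \alpha) \subseteq B(f, \e)$, and then dualise that slice into a ball $B[\lambda x, r]$ containing $C$ but missing $p$. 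This avoids manufacturing denting points altogether and is essentially how \cite{CL} proceeds.

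For~(b) your outline is along the correct lines and matches the strategy of \cite{BGG}: in both directions one passes between balls in $X$ and $w^*$-slices of $B(X^*)$ via the bipolar correspondence, and the uniformity of $K(\e)$ and of $\alpha(\e)$ translate into each other. The only thing missing is the explicit quantitative link (e.g.\ $\alpha$ in terms of $K$ and the diameter bound $1/\e$, and conversely $r_0$ in terms of $\alpha$), which in \cite{BGG} is handled by a lemma converting the inclusion $S(B(X^*), x, \alpha) \subseteq \{g : g(z) > 0 \text{ for all } z \in C\}$ into $C \subseteq B[\lambda x, \lambda']$ for explicit $\lambda, \lambda'$; you would need to supply that computation.
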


We now recall the notions of uniform convexity.

\begin{definition} \rm
For $t >0$, let us consider the following quantity:
\[
\delta_X (t):= \inf \left\{1 - \frac{\|x + y\|}{2} : x, y \in S(X), \|x - y\| \geq t\right\}.
\]

This quantity is called the modulus of convexity of $X$. A Banach space $X$ is uniformly convex if $\delta_X(t) >0$ for all $t >0$.

We will write $\del(t)$ in place of $\del_{X}(t)$ if there is no scope of confusion.
\end{definition}

\subsection{Modulus of Denting Point}

We now look at the modulus of denting and uniform denting points discussed by Dutta and Lin \cite{DL}. This serves as a background for the moduli we define later. For the sake of completeness, we include the proofs, filling up some of the details missing in \cite{DL}.

\begin{definition} \rm
For $0 <t <2$, $f \in S(X^*)$ and $x \in S(X)$, define
\[
s(x, f, t) := \inf \{\|x + y\| - 1 : y \in \ker f, \|y\| \geq t/4\}.
\]
For $0 <t <2$ and $x \in S(X)$, define
\beqa
d(x, t) & := & \sup_{f \in S(X^*)}s(x, f, t) \quad \mbox{ and}\\
d(t) & := & \inf_{x \in S(X)}d(x, t).
\eeqa
\end{definition}

Note that for $0 <t <2$, $x \in S(X)$, $f \in D(x)$ and $y \in \ker f$,
\[
1 = f(x) = f(x+y) \leq \|x+y\|.
\]
It follows that $s(x, f, t) \geq 0$ and hence, $d(x, t) \geq 0$.

\begin{lemma} \cite[Lemma 3.1]{DL} \label{lem1}
Let $x \in S(X)$ and $f \in S(X^*)$ be such that $f(x) > 0$ and for some $0 <t <2$, $s(x, f, t) >0$, then
\[
diam (S(B(X), f, f(x)(1-s(x, f, t)))) < 2t.
\]
\end{lemma}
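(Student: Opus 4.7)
My plan is to show that every point $z$ in the slice $S(B(X), f, f(x)(1-s))$, where $s = s(x,f,t)$, satisfies $\|z - x\| < t$ with a uniform bound, so that the diameter of the slice is strictly less than $2t$ by the triangle inequality applied with $x$ as a common reference point.

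The starting decomposition is $z = \mu x + v$ with $\mu = f(z)/f(x)$ and $v = z - \mu x \in \ker f$; the slice condition $f(z) > f(x)(1-s)$ becomes exactly $\mu > 1-s$. Before splitting into cases, I would record the useful bound $s \leq t/4$: taking any $y \in \ker f$ with $\|y\| = t/4$ (which exists because the assumption $s > 0$ forces $\ker f$ to be nontrivial), the triangle inequality gives $\|x+y\| - 1 \leq t/4$, so the infimum defining $s$ is at most $t/4$. The argument then divides according to whether $\mu \leq 1$ or $\mu > 1$.

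If $\mu \leq 1$, the identity $x + v = (1-\mu) x + z$ yields $\|x + v\| \leq (1-\mu) + \|z\| \leq 2 - \mu < 1 + s$; by the contrapositive of the definition of $s$, this forces $\|v\| < t/4$, and hence $\|z - x\| \leq (1-\mu) + \|v\| < s + t/4 \leq t/2$. If $\mu > 1$, I would use two complementary bounds on $\|v\|$: the reverse triangle inequality gives $\|v\| = \|z - \mu x\| \geq \mu - \|z\| \geq \mu - 1$, while rescaling gives $\|x + v/\mu\| = \|z\|/\mu \leq 1/\mu < 1 < 1+s$, which forces $\|v/\mu\| < t/4$, i.e., $\|v\| < \mu t/4$. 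Together these yield $\mu - 1 < \mu t/4$, hence $\mu < 4/(4-t)$, and therefore $\|z - x\| \leq (\mu - 1) + \|v\| < 2t/(4-t)$. Since $t < 2$, we have $2t/(4-t) < t$, closing this case.

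The main obstacle is case 2 ($\mu > 1$), which can occur precisely when $f(x)$ is much smaller than $\|z\|$; here the trick is to combine the two bounds on $\|v\|$ to first pin $\mu$ down, and the hypothesis $t < 2$ is used exactly to ensure $2t/(4-t) < t$. Combining the two cases gives the uniform bound $\|z-x\| \leq 2t/(4-t)$ for every $z$ in the slice, so by the triangle inequality the diameter is at most $4t/(4-t) < 2t$.
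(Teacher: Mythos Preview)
Your proof is correct and follows essentially the same approach as the paper: the same decomposition $z=\mu x+v$ with $\mu=f(z)/f(x)$ and $v\in\ker f$, the same bound $s\le t/4$, and the same case split according to $\mu\le 1$ or $\mu>1$, leading to the same final estimate $2t/(4-t)<t$.

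The one noteworthy difference is in the second case. The paper first establishes an auxiliary claim $f(x)\ge 1-t/4$ (by a separate contradiction argument) and uses it to bound $\|v\|<t/(4f(x))$ and $\mu-1\le 1/f(x)-1$. You instead combine the reverse triangle inequality $\|v\|\ge\mu-1$ with the rescaled bound $\|v\|<\mu t/4$ to get $\mu<4/(4-t)$ directly, bypassing the claim entirely. Both routes land on the same estimate $\|z-x\|<2t/(4-t)$, but yours is slightly more economical.
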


\begin{proof}
Suppose $s = s(x, f, t) >0$. Let $v \in \ker f$ with $\|v\|=t/4$. Clearly, $s \leq \|x\| + \|v\| - 1 \leq t/4 <1$.

\textsc{Claim~:} $f(x) \geq 1 -t/4$.

Let, if possible, $f(x) < 1 -t/4$. Let $0 <\e < \min\{s/(1+s), 1- t/4 - f(x)\}$.

Let $v \in S(X)$ be such that $f(v) >1 -\e$. Put $z = x - \frac{f(x)}{f(v)}v$. Then, $z \in \ker f$ and
\[
\|z\| \geq 1 - \frac{f(x)}{f(v)} \geq 1- \frac{1-t/4-\e}{1-\e} = \frac{t/4}{1-\e} >t/4.
\]
 It follows that
 \[
 1 + s \leq \|x-z\| \leq \frac{f(x)}{f(v)}\leq \frac{1}{1-\e} < 1 + s.
 \]
 This is a contradiction. Hence, the claim.

Let $y \in S = S(B(X), f, f(x)(1-s))$. Then, $f(y) >f(x)(1-s)$. Consider $z = y - \frac{f(y)}{f(x)}x$. Then $z \in \ker f$. Also,
\[
\|x + z\| - 1 = \left\|x + y - \frac{f(y)}{f(x)}x\right\| - 1 \leq \left|1 - \frac{f(y)}{f(x)}\right|.
\]

\textsc{Case 1 :} Suppose $f(y) \leq f(x)$. Then, $|1 - f(y)/f(x)| = 1 - f(y)/f(x) < s$. So, $\|z\| <t/4$. Hence,
\[
\|y - x\| = \left\|z + \frac{f(y)}{f(x)}x -x\right\| \leq \|z\| + \left|1 - \frac{f(y)}{f(x)}\right| <t/4 + s \leq t/4 + t/4 = t/2.
\]

\textsc{Case 2 :} If $1 \geq f(y) >f(x)$, put $z_1 = f(x)z = f(x)y - f(y)x$. Then $z_1 \in \ker f$. Also,
\[
\|x + z_1\| - 1 = \|x + f(x)y - f(y)x\| - 1 \leq f(x) + 1 - f(y) - 1 \leq 0 <s.
\]

 Then, $\|f(x)z\| < t/4$, i.e., $\|z\| < (t/4)/f(x)$. Therefore, by the claim above,
\[
\|y - x\| \leq \|z\| + \left|1 - \frac{f(y)}{f(x)}\right|
< \frac{t/4}{f(x)} + \frac{1}{f(x)} - 1
\leq \frac{t/4 + 1}{1 - t/4} - 1 < t.
\]
Since $x \in S$, $diam (S) < 2t$.
\end{proof}

\begin{lemma} \label{lem0}
Given $x \in S(X)$ and $0<t<2$, suppose there exists $f \in S(X^*)$ and $\alpha \in (0, 1)$ such that $x\in S:=S(B(X), f, \alpha)$ and $diam (S) <t/5$. Then
\[
s(x, f, t) \geq \beta := \min\left\{\frac{f(x) - \alpha}{\alpha}, \frac{t}{20}\right\} > 0.
\]
\end{lemma}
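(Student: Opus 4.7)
I would prove this by contradiction: assume there exists $y\in\ker f$ with $\|y\|\ge t/4$ such that $\gamma:=\|x+y\|-1<\beta$, and aim to produce a second point of $S$ whose distance from $x$ exceeds $t/5$, contradicting the diameter bound. The natural witness is the normalisation $z:=(x+y)/(1+\gamma)$, which lies in $S(X)$ since $\|x+y\|=1+\gamma$.

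The first substantive step is to place $z$ inside the slice. Here I would check $f(z)>\alpha$ directly: since $y\in\ker f$,
\[
f(z)=\frac{f(x)}{1+\gamma},
\]
and the assumption $\gamma<\beta\le (f(x)-\alpha)/\alpha$ rearranges to $\alpha(1+\gamma)<f(x)$, which gives $f(z)>\alpha$ as needed. So $z\in S$, and together with $x\in S$ the diameter hypothesis forces $\|z-x\|<t/5$.

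The remaining step is the algebraic pinch. Rewriting $z-x=(y-\gamma x)/(1+\gamma)$ and applying the reverse triangle inequality,
\[
\|z-x\|\;=\;\frac{\|y-\gamma x\|}{1+\gamma}\;\ge\;\frac{\|y\|-\gamma}{1+\gamma}\;\ge\;\frac{t/4-\gamma}{1+\gamma}.
\]
Combining with $\|z-x\|<t/5$ gives $t/4-\gamma<(t/5)(1+\gamma)$, i.e.\ $t/20<\gamma(1+t/5)$. Since $t\in(0,2)$ this yields a lower bound on $\gamma$ of essentially the size $t/20$, contradicting $\gamma<\beta\le t/20$; one may also reduce to the case $\|y\|=t/4$ at the outset by the convexity argument $\|x+(t/(4\|y\|))y\|-1\le(t/(4\|y\|))\gamma$, which shows the infimum defining $s(x,f,t)$ is attained on the sphere $\|y\|=t/4$.

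The step I expect to be the main point of care is handling the two pieces of the minimum in $\beta$ simultaneously: the quantity $(f(x)-\alpha)/\alpha$ is exactly what is needed to ensure $z$ lies strictly inside the slice, while $t/20$ is what drops out of the diameter inequality. Taking the minimum lets both constraints operate at once, and positivity $\beta>0$ is automatic from $x\in S$ (which gives $f(x)>\alpha$, hence $(f(x)-\alpha)/\alpha>0$) together with $t>0$; the bookkeeping with strict versus non-strict inequalities throughout needs to be kept honest to preserve the strict contradiction in the last line.
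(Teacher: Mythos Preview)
Your strategy---assume a bad $y$, normalise $z=(x+y)/(1+\gamma)$, put $z$ into the slice via $\gamma<(f(x)-\alpha)/\alpha$, then compare $\|y\|$ with $t/4$---is exactly the paper's. The gap is in the final ``algebraic pinch'': from $t/20<\gamma(1+t/5)$ you only obtain $\gamma>t/(20+4t)$, and for $t\in(0,2)$ this is strictly smaller than $t/20$ (for $t$ near $2$ it is about $t/28$). So there is no contradiction with $\gamma<\beta\le t/20$; the word ``essentially'' is doing work it cannot do here, since the constant $t/20$ in the statement of $\beta$ is sharp for this argument.

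The loss comes from applying the reverse triangle inequality to $y-\gamma x$. The paper instead writes
\[
y=(1+\gamma)z-x=(z-x)+\gamma z,
\]
which gives the sharper estimate $\|y\|\le\|z-x\|+\gamma\|z\|=\|z-x\|+\gamma<t/5+\gamma$. With $\gamma<t/20$ this yields $\|y\|<t/5+t/20=t/4$, an immediate contradiction with $\|y\|\ge t/4$. Equivalently, in the paper's language, $\|y_0\|=\|x-z_0\|\le\|x-z_0/\|z_0\|\|+\|z_0-z_0/\|z_0\|\|<t/5+(\|z_0\|-1)$. Your decomposition $(1+\gamma)(z-x)=y-\gamma x$ followed by $\|y-\gamma x\|\ge\|y\|-\gamma$ throws away exactly the factor that makes $t/5+t/20=t/4$ land on the nose.

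A minor secondary point: you never check $\gamma\ge 0$, which you implicitly use when writing $\|y-\gamma x\|\ge\|y\|-\gamma$ (the reverse triangle gives $\|y\|-|\gamma|$). The paper handles this first by observing that if $y\in\ker f$ and $x+y\in B(X)$ then $x+y\in S$, forcing $\|y\|<t/5<t/4$; hence any $y$ with $\|y\|\ge t/4$ has $\|x+y\|>1$, i.e.\ $\gamma>0$.
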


\begin{proof}
Let $y \in \ker f$ such that $z = x + y \in B(X)$. Then $z \in S$. Thus, $\|y\| = \|z - x\| < t/5$. This implies $s(x, f, t)\geq 0$.

Suppose $0 \leq s(x, f, t) < \beta$. Choose $\e >0$ such that $s(x, f, t) < \e < \beta$.

Then $1 \geq f(x) >\alpha (1+\e) >\alpha$.

Clearly, $x \in S_1:=\{y \in B(X) : f(y) >\alpha (1+\e)\} \subseteq S$ and so, $diam S_1 < t/5$.

Now, since $0 \leq s(x, f, t) < \e$, there exists $y_0 \in \ker f$ such that $\|y_0\|\geq t/4$ and $1 \leq \|x+y_0\|<1+\e$.

Put $z_0 = x+y_0$. Then, $f(z_0) >\alpha(1+\e)$. Therefore, $f(z_0/\|z_0\|)>\alpha$.

This implies, $z_0/\|z_0\| \in S$. So, $\|x - z_0/\|z_0\| \| < t/5$. Therefore,
\[
\|y_0\| = \|x - z_0\| \leq \left\|x - \frac{z_0}{\|z_0\|}\right\| + \left\|z_0 - \frac{z_0}{\|z_0\|}\right\| < \frac t5 + (\|z_0\| -1) < \frac t5 +\e < \frac t4.
\]
This is a contradiction, proving $s(x, f, t) \geq \beta$.
\end{proof}

\begin{theorem} \cite[Proposition 3.3 $(a)$]{DL} \label{thm1}
$x \in S(X)$ is a denting point of $B(X)$ if and only if $d(x, t) >0$ for all $0 <t <2$.
\end{theorem}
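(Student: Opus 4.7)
The plan is to prove the two implications separately, each one invoking one of the preceding two lemmas.

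For the forward direction, assume $x$ is a denting point and fix $0<t<2$. Applying the denting property at level $\e = t/5$ produces a slice $S(B(X), f, \alpha)$ containing $x$ of diameter less than $t/5$. Lemma~\ref{lem0} then yields immediately
\[
s(x, f, t) \geq \min\left\{\frac{f(x)-\alpha}{\alpha},\, \frac{t}{20}\right\} > 0,
\]
whence $d(x, t) \geq s(x, f, t) > 0$.

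The converse is the substantial half. I would assume $d(x, t) > 0$ for every $t \in (0, 2)$, and, given $\e > 0$, pick $0 < t < \min\{2, \e/2\}$ together with $f \in S(X^*)$ realising $s(x, f, t) > 0$. The goal is to feed this $f$ into Lemma~\ref{lem1}, which would output a slice through $x$ of diameter less than $2t < \e$; but that lemma requires the extra hypothesis $f(x) > 0$, which is not encoded in the definition of $d(x, t)$. I would recover this hypothesis in two steps. First, since $\ker(-f) = \ker f$, the symmetry $s(x, -f, t) = s(x, f, t)$ holds, so after possibly replacing $f$ by $-f$ I may assume $f(x) \geq 0$. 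Second, the edge case $f(x) = 0$ is ruled out by the explicit witness $y = -(t/4)x \in \ker f$, for which $\|y\| = t/4$ and $\|x+y\| = 1 - t/4 < 1$ force $s(x, f, t) \leq -t/4$, contradicting $s(x, f, t) > 0$. With $f(x) > 0$ now secured, Lemma~\ref{lem1} delivers the slice $S(B(X), f, f(x)(1 - s(x, f, t)))$ of diameter less than $2t$; it contains $x$ because $f(x)(1 - s(x, f, t)) < f(x)$. So $x$ lies in arbitrarily small slices and is denting.

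I expect the reduction to the case $f(x) > 0$ to be the principal obstacle, since the quantity $d(x, t)$ is blind to the sign of $f(x)$; the sign-flipping symmetry of $s(x, \cdot, t)$ together with the exclusion of $f(x)=0$ is precisely what reconciles the definition of $d(x, t)$ with the hypotheses of Lemma~\ref{lem1}. Everything else is routine bookkeeping of the quantitative content of the two lemmas.
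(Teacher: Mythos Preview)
Your proposal is correct and matches the paper's proof essentially line for line: both directions invoke the same two lemmas, and your handling of the sign issue---the symmetry $s(x,f,t)=s(x,-f,t)$ together with the witness $y=-(t/4)x$ to exclude $f(x)=0$---is exactly the argument the paper gives. The only cosmetic difference is that the paper phrases the denting conclusion as ``since $0<t<2$ is arbitrary'' rather than explicitly choosing $t<\e/2$, but this is the same reasoning.
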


\begin{proof} Let $0 <t <2$ be given. Suppose $d(x, t) >0$. Then there exists $f \in S(X^*)$ such that $s = s(x, f, t) >0$.

If $f(x) = 0$, then $(t/4)(-x) \in \ker f$. So, $s \leq \|x - (t/4)x\| -1 \leq -t/4 <0$, a contradiction. Hence, $f(x) \neq 0$. Note that $s(x, f, t) = s(x, -f, t)$. Therefore, without loss of generality, we may assume $f(x) > 0$. Hence, by Lemma~\ref{lem1}, $x \in S = S(B(X), f, f(x)(1-s))$ and $diam (S) < 2t$. Since $0 <t <2$ is arbitrary, $x$ is a denting point of $B(X)$.

Conversely, let $0<t<2$ be given. Since, $x$ is denting point, there exists $f \in S(X^*)$ and $\alpha \in (0, 1)$ such that $x\in S:=S(B(X), f, \alpha)$ and $diam (S) <t/5$.

By Lemma~\ref{lem0}, $s(x, f, t) \geq \min\{(f(x) - \alpha)/\alpha, t/20\} > 0$ which implies that $d(x, t)>0$.
\end{proof}

\begin{definition} \rm
A Banach space $X$ is uniformly denting if $d(t) >0$ for all $0 <t <2$.
\end{definition}

Dutta and Lin \cite{DL} sketch an argument to show that if $X$ is uniformly denting then $X$ has an equivalent uniformly convex norm. We will elaborate and improve on this in Section~\ref{s4}.

We end this section with another observation regarding the above modulus.
\begin{proposition} \label{prp2}
If $D$ is a dense set in $S(X)$, then
\[
d(t) = \inf_{x\in D}d(x, t) \mbox{ for all } 0<t<2.
\]
\end{proposition}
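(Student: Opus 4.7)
The plan is to show that the map $x \mapsto d(x,t)$ is $1$-Lipschitz on $S(X)$ (for each fixed $t$), after which the result is immediate: since $D$ is dense in $S(X)$ and a continuous function has the same infimum on a dense set as on the whole set, one concludes that $\inf_{x\in D} d(x,t) = \inf_{x\in S(X)} d(x,t) = d(t)$.

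The reduction goes through two steps. First, I would fix $f\in S(X^*)$ and show that $x \mapsto s(x,f,t)$ is $1$-Lipschitz on $S(X)$. Given $x, x' \in S(X)$ and $\e > 0$, pick $y_0 \in \ker f$ with $\|y_0\| \geq t/4$ and $\|x+y_0\| - 1 < s(x,f,t) + \e$. The triangle inequality gives
\[
\|x' + y_0\| - 1 \le \|x + y_0\| - 1 + \|x - x'\| < s(x,f,t) + \e + \|x-x'\|,
\]
and since $y_0$ is a candidate in the infimum defining $s(x',f,t)$, we get $s(x',f,t) \le s(x,f,t) + \e + \|x-x'\|$. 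Letting $\e \to 0$ and swapping the roles of $x$ and $x'$ yields $|s(x,f,t) - s(x',f,t)| \le \|x-x'\|$.

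Next, since $d(x,t) = \sup_{f \in S(X^*)} s(x,f,t)$ is a supremum of a family of functions that are each $1$-Lipschitz in $x$ with the same constant, $d(\cdot, t)$ is itself $1$-Lipschitz on $S(X)$. Explicitly, for each $f$, $s(x',f,t) \le s(x,f,t) + \|x-x'\| \le d(x,t) + \|x-x'\|$, so taking the supremum in $f$ gives $d(x',t) \le d(x,t) + \|x-x'\|$, and the reverse follows by symmetry.

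Finally, $\inf_{x\in D} d(x,t) \ge d(t)$ is obvious as $D \subseteq S(X)$. For the reverse, given $x \in S(X)$ and $\e > 0$, choose $x_n \in D$ with $x_n \to x$; the Lipschitz bound forces $d(x_n, t) \to d(x,t)$, so $\inf_{y \in D} d(y,t) \le d(x,t) + \e$ for all small enough $n$. Since $x \in S(X)$ and $\e$ are arbitrary, $\inf_{y \in D} d(y,t) \le d(t)$. There is no real obstacle here; the only thing worth double-checking is that the test element $y_0$ used for $s(x,f,t)$ also qualifies for $s(x',f,t)$, but this is automatic since membership in the infimum's index set ($y \in \ker f$ and $\|y\| \ge t/4$) depends only on $y$, not on the point $x$.
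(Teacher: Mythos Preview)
Your proof is correct and follows essentially the same approach as the paper: both establish the one-sided inequality $s(z,f,t) \le s(x,f,t) + \|x-z\|$ via the triangle inequality, take the supremum over $f$ to obtain $d(z,t) \le d(x,t) + \|x-z\|$, and then use density to conclude. The paper is slightly more economical in that it only proves the one-sided bound needed, whereas you package it as a full $1$-Lipschitz statement, but the core idea is identical.
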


\begin{proof}
Let $d_D(t):=\inf_{x\in D}d(x, t)$. Clearly, $d_D(t)\geq d(t)$ for all $0<t<2$.

Let $\e>0$ and $x \in S(X)$. Since $D$ is dense in $S(X)$, there exists $z \in D$ such that $\|x-z\|<\e$. Then for any $f \in S(X^*)$ and $y \in \ker f$ with $\|y\| \geq t/4$, we have $\|z+y\| \leq \|x +y\| + \e$. It follows that $s(z, f, t) \leq s(x, f, t) + \e$ and hence, $d_D(t)\leq d(z, t) \leq d(x, t) + \e$. Therefore, $d_D(t)\leq d(t) + \e$.
\end{proof}

\section{Modulus of w*-Semidenting Points}

In this section, we are going to study the MIP and the UMIP with the help of some moduli motivated by the above moduli.

We begin with the analogues of the moduli defined above for dual spaces.

\begin{definition} \rm
For $0 <t <2$, $f \in S(X^*)$ and $x \in S(X)$, define
\[
s^*(f, x, t) := \inf \{\|f+h\|-1 : \|h\|\geq t/4, h(x)=0\}.
\]
For $0 <t <2$ and $f \in S(X^*)$, define
\beqa
d^*(f, t) & := & \sup_{x \in S(X)} s^*(f, x, t) \quad \mbox{ and}\\
d^*(t) & := & \inf_{f \in S(X^*)} d^*(f, t).
\eeqa
\end{definition}

The following lemma is the w*-analogue of Lemma \ref{lem1} and Theorem ~\ref{thm1}.
\begin{lemma} \label{lem7}
\bla
\item Let $x \in S(X)$ and $f \in S(X^*)$ be such that $f(x) > 0$ and for some $0 <t <2$, $s^*(f, x, t) >0$, then
\[
diam (S(B(X^*), x, f(x)(1-s^*(f, x, t)))) < 2t.
\]

\item $f \in S(X^*)$ is a w*-denting point of $B(X^*)$ if and only if $d^*(f, t) >0$ for all $0 <t <2$.
\el
\end{lemma}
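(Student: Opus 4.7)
The plan is to dualize the arguments of Lemma \ref{lem1} and Theorem \ref{thm1}, interchanging the roles of $X$ and $X^*$. In the dual picture, the slicing functional $f$ is replaced by an evaluation at $x \in S(X)$ (yielding a w*-slice of $B(X^*)$), while the kernel $\ker f \subseteq X$ is replaced by the w*-closed hyperplane $\{h \in X^* : h(x) = 0\}$. With this dictionary, the algebraic manipulations and triangle-inequality estimates translate almost verbatim.

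For part $(a)$, set $s^* = s^*(f, x, t) > 0$ and first prove the claim $f(x) \geq 1 - t/4$ by contradiction. The dual of the original construction is: pick $v \in S(X^*)$ with $v(x) > 1 - \e$, available because $\|x\| = 1$, and set $k = f - (f(x)/v(x)) v$. Then $k(x) = 0$, and the reverse triangle inequality gives $\|k\| \geq 1 - f(x)/v(x) > t/4$ for a suitable $\e$. Using $-k \in \ker(x)$ in the definition of $s^*$ forces $1 + s^* \leq \|f - k\| = f(x)/v(x) < 1 + s^*$, a contradiction. Next, for any $g$ in the w*-slice $S := S(B(X^*), x, f(x)(1 - s^*))$, decompose $g = k + (g(x)/f(x)) f$ with $k \in \ker(x)$ and split into the cases $g(x) \leq f(x)$ and $g(x) > f(x)$, scaling by $f(x)$ in the second case exactly as in Lemma \ref{lem1}. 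Each case yields $\|g - f\| < t$ via the bound on $\|k\|$ and the claim $f(x) \geq 1 - t/4$, so $\text{diam}(S) < 2t$.

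For the forward direction of $(b)$, assuming $d^*(f, t) > 0$ for every $0 < t < 2$, fix $t$ and pick $x \in S(X)$ with $s^*(f, x, t) > 0$. The case $f(x) = 0$ cannot occur, since then $-f \in \ker(x)$ with $\|-f\| = 1 \geq t/4$ would force $s^* \leq \|f - f\| - 1 = -1$. Because $\ker(x) = \ker(-x)$, the quantity $s^*(f, x, t)$ is invariant under $x \mapsto -x$, so we may assume $f(x) > 0$ and invoke $(a)$ to produce a w*-slice of diameter $< 2t$ containing $f$. For the converse, I would first establish a w*-analogue of Lemma \ref{lem0}: if $f \in S := S(B(X^*), x, \alpha)$ with $\text{diam}(S) < t/5$, then $s^*(f, x, t) \geq \min\{(f(x) - \alpha)/\alpha,\, t/20\} > 0$. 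The proof mirrors Lemma \ref{lem0}: assume for contradiction $s^* < \beta$, choose $\e$ with $s^* < \e < \beta$, produce $h_0 \in \ker(x)$ with $\|h_0\| \geq t/4$ and $\|f + h_0\| < 1 + \e$, renormalize $g_0 := f + h_0$ so that $g_0/\|g_0\| \in S$, and use $\text{diam}(S) < t/5$ together with $\|g_0\| - 1 < \e < t/20$ to conclude $\|h_0\| < t/5 + \e < t/4$, a contradiction.

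The whole argument is a symbol-level translation, so I anticipate no substantive obstacle. The delicate point is the reverse-triangle estimate $\|k\| \geq 1 - f(x)/v(x) > t/4$ in $(a)$, which underpins the claim $f(x) \geq 1 - t/4$ and requires a careful choice of $\e$ smaller than $\min\{s^*/(1 + s^*),\, 1 - t/4 - f(x)\}$ so that the two ends of the contradiction chain $1 + s^* \leq f(x)/v(x) < 1 + s^*$ are forced to clash.
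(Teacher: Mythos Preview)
Your proposal is correct and takes essentially the same approach as the paper: the paper gives no explicit proof of Lemma~\ref{lem7}, merely remarking that it is the w*-analogue of Lemma~\ref{lem1} and Theorem~\ref{thm1}, and your careful symbol-level dualization (swapping $X \leftrightarrow X^*$, $f \leftrightarrow x$, $\ker f \leftrightarrow \{h : h(x)=0\}$) is exactly the intended argument. Your handling of the case $f(x)=0$ in part~$(b)$ using $-f$ itself rather than $(t/4)(-f)$ is a harmless variant, and your invocation of the w*-analogue of Lemma~\ref{lem0} for the converse direction matches the structure of Theorem~\ref{thm1}.
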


\begin{definition} \rm
$X^*$ is uniformly w*-denting if $d^*(t) >0$ for all $0 <t <2$.
\end{definition}

The proof of $(b)$ below is essentially contained in that of \cite[Fact 2.2]{DHR}. We include the proof for completeness.

\begin{lemma} \label{lem8}
Let $\e > 0$, $0 < \beta < \alpha < 1$ and $x \in S(X)$. \bla
\item If $diam (S(B(X^*), x, \alpha)) < \e$, then $\alpha \geq 1 - \e$.
\item $\ds diam (S(B(X^*), x, \beta)) \leq \left(\frac{1-\beta}{1-\alpha}\right) diam (S(B(X^*), x, \alpha))$.
\el
\end{lemma}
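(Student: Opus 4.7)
The plan is to use a single auxiliary functional throughout. By Hahn–Banach, since $x \in S(X)$, we may pick $g_0 \in S(X^*)$ with $g_0(x) = 1$; then $g_0$ lies in every slice of the form $S(B(X^*), x, \gamma)$ with $\gamma < 1$. Both parts will follow by exploiting this ``anchor point'' inside the slices.

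For $(a)$, I would observe that for every scalar $\lambda$ with $\alpha < \lambda \leq 1$, the functional $\lambda g_0$ lies in $B(X^*)$ and satisfies $(\lambda g_0)(x) = \lambda > \alpha$, so $\lambda g_0 \in S(B(X^*), x, \alpha)$. Since $\|g_0 - \lambda g_0\| = 1 - \lambda$, letting $\lambda \downarrow \alpha$ forces the diameter of the slice to be at least $1 - \alpha$. The hypothesis $\mathrm{diam}(S(B(X^*), x, \alpha)) < \e$ then yields $1 - \alpha < \e$, i.e.\ $\alpha \geq 1 - \e$.

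For $(b)$, given any $f_1, f_2 \in S(B(X^*), x, \beta)$, I would form the convex combinations $h_i := \lambda f_i + (1 - \lambda) g_0$ with $\lambda := (1-\alpha)/(1-\beta) \in (0, 1)$. A direct computation using $f_i(x) > \beta$ shows $h_i(x) > \alpha$, so $h_i \in S(B(X^*), x, \alpha)$; meanwhile the $g_0$-contributions cancel in the difference, giving $h_1 - h_2 = \lambda(f_1 - f_2)$. Hence $\|f_1 - f_2\| = \lambda^{-1} \|h_1 - h_2\| \leq \frac{1-\beta}{1-\alpha}\, \mathrm{diam}(S(B(X^*), x, \alpha))$, and taking the sup over $f_1, f_2$ gives $(b)$. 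There is no real obstacle here — the only nontrivial choice is the value of $\lambda$, which is forced by requiring the worst case $f_i(x) \downarrow \beta$ to still land inside the smaller slice.
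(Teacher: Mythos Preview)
Your proof is correct and follows essentially the same approach as the paper's. In $(a)$ the paper also picks $h\in D(x)$ and exhibits a second element $g$ of the slice with $\|h-g\|\geq h(x)-g(x)>\e$ when $\alpha<1-\e$; your choice $g=\lambda g_0$ is simply a concrete realisation of this. In $(b)$ the paper uses the identical convex combination with the anchor in $D(x)$, only with the roles of $\lambda$ and $1-\lambda$ interchanged (the paper's $\lambda$ equals your $1-\lambda=(\alpha-\beta)/(1-\beta)$).
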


\begin{proof}
$(a)$. Let $h \in D(x)$. If possible, let $\alpha < 1-\e$. Choose $g \in B(X^*)$ such that $\alpha < g(x) < 1-\e$. Then $h, g \in S(B(X^*), x, \alpha)$ but $\|h-g\| \geq h(x) - g(x) > \e$.

$(b)$. Let $g, h \in S(B(X^*), x, \beta)$, $f \in D(x)$ and $\lambda = \frac{\alpha - \beta}{1-\beta}$.
Let $g_{\lambda}:=\lambda f + (1-\lambda)g$ and $h_{\lambda}:=\lambda f + (1-\lambda)h$.
We have,
\[
g_{\lambda}(x)> \lambda + (1-\lambda) \beta = \alpha.
\]
So, $g_{\lambda} \in S(B(X^*), x, \alpha)$. Similarly, $h_{\lambda} \in S(B(X^*), x, \alpha)$. Therefore,
\beqa
\|h-g\| & = & \frac{\|g_{\lambda} - h_{\lambda}\|}{1-\lambda} \leq \frac{diam (S(B(X^*), x, \alpha))}{1-\lambda}\\ & = & \left(\frac{1-\beta}{1-\alpha}\right) diam (S(B(X^*), x, \alpha)).
\eeqa
Hence,
\[
diam (S(B(X^*), x, \beta)) \leq \left(\frac{1-\beta}{1-\alpha}\right) diam (S(B(X^*), x, \alpha)).
\]
\end{proof}

\begin{corollary} \label{cor2}
Let $\gamma >0$ and $k \in \N$ be such that $1-2k(1-\gamma) > 0$, then
\[
diam (S(B(X^*), x, 1-2k(1-\gamma))) \leq 2k (diam(S(B(X^*), x, \gamma))).
\]
\end{corollary}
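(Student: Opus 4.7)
The corollary is a direct application of Lemma~\ref{lem8}(b) with the right choice of parameters. The plan is to set $\alpha = \gamma$ and $\beta = 1 - 2k(1-\gamma)$ and verify the hypothesis $0 < \beta < \alpha < 1$, after which the ratio $(1-\beta)/(1-\alpha)$ telescopes cleanly to $2k$.

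More precisely, first I would observe that the hypothesis $1 - 2k(1-\gamma) > 0$ gives $\beta > 0$, and the slice parameter $\gamma$ must satisfy $\gamma < 1$ (otherwise the slice $S(B(X^*), x, \gamma)$ is empty and the statement is trivial). The inequality $\beta < \alpha$ amounts to $1 - 2k(1-\gamma) < \gamma$, equivalently $(1-\gamma)(1 - 2k) < 0$, which holds because $1 - \gamma > 0$ and $1 - 2k < 0$ for every $k \in \N$ with $k \geq 1$. (If $k = 0$ were allowed, then $\beta = 1$, which contradicts $\beta < 1$; in any case the statement is vacuous for $k = 0$.)

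Next I would apply Lemma~\ref{lem8}(b) with these choices to obtain
\[
diam (S(B(X^*), x, 1-2k(1-\gamma))) \leq \frac{1 - (1-2k(1-\gamma))}{1 - \gamma}\, diam (S(B(X^*), x, \gamma)) = 2k \cdot diam (S(B(X^*), x, \gamma)),
\]
where the final equality is the cancellation of $1 - \gamma$.

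There isn't really a main obstacle here; the only thing to be careful about is verifying that the hypothesis $0 < \beta < \alpha < 1$ of Lemma~\ref{lem8}(b) is met, which reduces to the two elementary inequalities above. One could, as an alternative, iterate Lemma~\ref{lem8}(b) in $k$ steps along a sequence of intermediate thresholds, but the direct single application is cleaner and gives exactly the stated bound.
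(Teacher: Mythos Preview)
Your proposal is correct and is exactly the intended argument: the paper states Corollary~\ref{cor2} without proof as an immediate consequence of Lemma~\ref{lem8}(b), and your choice $\alpha=\gamma$, $\beta=1-2k(1-\gamma)$ with the verification $0<\beta<\alpha<1$ and the simplification $(1-\beta)/(1-\alpha)=2k$ is precisely how it follows.
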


\begin{theorem} \label{thm}
For a Banach space $X$, the following are equivalent~: \bla
\item For every $t >0$, there exists $\delta >0$ such that for every $f \in S(X^*)$, there exists $x \in S(X)$ satisfying
\[
diam (S(B(X^*), x, f(x)-\delta)) <t.
\]
\item For every $t >0$, there exists $\del >0$ such that for every $f \in S(X^*)$, there exists $x \in S(X)$ satisfying
\[
diam(S(B(X^*), x, f(x)(1-\del))) <t.
\]
\item $X^*$ is uniformly w*-denting.
\el
\end{theorem}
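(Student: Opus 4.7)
The plan is to establish the cycle $(a) \Rightarrow (b) \Rightarrow (c) \Rightarrow (a)$, with Lemma~\ref{lem7}$(a)$ and Lemma~\ref{lem8}$(a)$ doing most of the level-chasing between $f(x)$-normalized w*-slices and w*-slices of small diameter. The first implication is automatic: since $f(x) \leq 1$, one has $f(x)(1-\delta) \geq f(x) - \delta$ for every $\delta \geq 0$, so the $(b)$-slice sits inside the $(a)$-slice for the same $\delta$ and inherits its diameter bound.

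For $(b) \Rightarrow (c)$, I would first establish the w*-analogue of Lemma~\ref{lem0}: if $f \in S(B(X^*), x, \alpha)$ and $diam(S(B(X^*), x, \alpha)) < t/5$, then $s^*(f, x, t) \geq \min\{(f(x)-\alpha)/\alpha,\, t/20\}$. The argument mirrors Lemma~\ref{lem0} almost verbatim, with $h \in X^*$ satisfying $h(x) = 0$ and $\|h\| \geq t/4$ playing the role of $y \in \ker f$, and with the normalization $(f+h_0)/\|f+h_0\|$ still lying in the slice because $(f+h_0)(x) = f(x)$. Having this in hand, I apply $(b)$ with parameter $t/5$ to get $x$ with $diam(S(B(X^*), x, f(x)(1-\delta))) < t/5$; Lemma~\ref{lem8}$(a)$ then forces $f(x)(1-\delta) \geq 1 - t/5$, so $f(x) > 0$ and $f$ lies in its own slice, and the w*-analogue lemma produces $s^*(f, x, t) \geq \min\{\delta/(1-\delta),\, t/20\}$ uniformly in $f$, which is exactly $d^*(t) > 0$.

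For $(c) \Rightarrow (a)$, I would apply $(c)$ at level $t'' := \min(t/2,\, 1/4)$, set $\eta := d^*(t'')/2 > 0$, and for each $f \in S(X^*)$ pick $x$ with $s^*(f, x, t'') > \eta$. The possibility $f(x) = 0$ is ruled out because testing $h := -(t''/4)f$ would give $s^*(f, x, t'') \leq -t''/4$, and the symmetry $s^*(f, x, \cdot) = s^*(f, -x, \cdot)$ lets me assume $f(x) > 0$. Lemma~\ref{lem7}$(a)$ then yields $diam(S(B(X^*), x, f(x)(1 - s^*(f, x, t'')))) < 2t'' \leq t$, while Lemma~\ref{lem8}$(a)$ forces $f(x)(1 - s^*(f, x, t'')) \geq 1 - 2t'' \geq 1/2$ and hence $f(x) \geq 1/2$; therefore $\delta := \eta/2$ gives $f(x) - \delta > f(x)(1 - s^*(f, x, t''))$, placing the $(a)$-slice inside the controlled one. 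The main obstacle is proving the w*-analogue of Lemma~\ref{lem0} used in $(b) \Rightarrow (c)$: one has to carry the normalization argument through in $X^*$ while tracking the levels $\alpha$ and $\alpha(1+\varepsilon)$ and exploiting that $(f+h_0)(x) = f(x)$ is unchanged by the perturbation. Once that lemma is secured, everything else reduces to routine level-chasing with Lemmas~\ref{lem7} and~\ref{lem8}.
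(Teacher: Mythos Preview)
Your proof is correct and follows the paper's route for $(a)\Rightarrow(b)$ and $(b)\Rightarrow(c)$; in particular, your explicit flag that one needs the w*-analogue of Lemma~\ref{lem0} is well taken, as the paper simply cites Lemma~\ref{lem0} at that step without restating it in $X^*$.

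The one genuine difference is in $(c)\Rightarrow(a)$. The paper applies $(c)$ at level $t/4$, sets $\delta:=d^*(t/4)/2$, and from Lemma~\ref{lem7}$(a)$ gets $diam(S(B(X^*),x,f(x)(1-\delta)))<t/2$; it then invokes Corollary~\ref{cor2} (the slice-doubling estimate) to pass to the level $1-2(1-f(x)(1-\delta))$ with diameter $<t$, and finishes with the algebraic check $f(x)-\delta\geq 2f(x)(1-\delta)-1$. Your route bypasses Corollary~\ref{cor2} entirely: by applying $(c)$ at $t''\leq 1/4$ you can read off $f(x)\geq f(x)(1-s^*)\geq 1-2t''\geq 1/2$ directly from Lemma~\ref{lem8}$(a)$, and then $\delta:=\eta/2<f(x)\,s^*(f,x,t'')$ gives the desired inclusion of slices. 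This is a cleaner bookkeeping at the cost of a slightly worse constant (you consume $t''=\min(t/2,1/4)$ rather than $t/4$), whereas the paper's doubling trick keeps the dependence on $t$ a bit sharper but needs the extra corollary. Both arguments are short and equivalent in strength.
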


\begin{proof}
$(a) \implies (b)$.
Let $t >0$ be given. So, there exists $\delta >0$ such that for every $f \in S(X^*)$, there exists $x \in S(X)$ satisfying
\[
diam (S(B(X^*), x, f(x)-\delta)) <t.
\]
Since, $f(x)(1-\delta) \geq f(x)-\delta$,
\[
diam (S(B^*, x, f(x)(1-\delta))) <t.
\]

$(b) \implies (c)$.
Let $t >0$ be given. By $(b)$, there exists $\del > 0$ that works for $t/5$. Let $f \in S(X^*)$. Then, there exists $x \in S(X)$ satisfying
\[
diam(S(B(X^*), x, f(x)(1-\del))) <t/5.
\]
By Lemma~\ref{lem0}, $s^*(f, x, t) \geq \beta(t) := \min\{\del/(1-\del), t/20\} > 0$, which implies that $d^*(t) \geq \beta(t) >0$. Hence, $X^*$ is uniformly w*-denting.

$(c) \implies (a)$.
Let $t >0$ be given. Then, $\delta:=d^*(t/4)/2 >0$. So, by definition, for every $f \in S(X^*)$, there exists $x \in S(X)$ such that $s^*(f, x, t/4) >\delta$. Therefore, by Lemma~\ref{lem7}(a),
\[
diam (S(B(X^*),x,f(x)(1-\delta)))<t/2.
\]

By Corollary~\ref{cor2},
\[
diam (S(B(X^*), x, 1-2(1-f(x)(1-\delta)))) <t.
\]
Now,
\[
f(x)-\delta + 1 \geq f(x)(1-\delta) + f(x)(1-\delta) = 2f(x)(1-\delta).
\]
This implies that
\[
f(x)-\delta \geq 2f(x)(1-\delta)-1 = 1-2(1-f(x)(1-\delta)).
\]

Hence,
\[
diam (S(B(X^*), x, f(x)-\delta)) <t.
\]
\end{proof}

\begin{definition} \rm
For $0 <t <2$ and $f \in S(X^*)$, define
\beqa
d^*_0(f, t) & := & \sup \{s^*(g, x, t) : x\in S(X), \, g\in S(X^*), \, \|f-g\|<t\}\\
& = & \sup \{d^*(g, t) : g\in S(X^*), \, \|f-g\|<t\} \quad \mbox{ and}\\
d_0^*(t) & := & \inf_{f \in S(X^*)} d_0^*(f, t).
\eeqa
\end{definition}

\begin{theorem} \label{thm4}
\bla
\item $f\in S(X^*)$ is a w*-semidenting point of $B(X^*)$ if and only if for any $0 <t <2$, $d^*_0(f, t) > 0$.
\item $X$ has the MIP if and only if $d^*_0(f, t) >0$ for every $f \in S(X^*)$ and $t \in (0, 2)$.
\item $X$ has the UMIP if and only if for any $t \in (0, 2)$,
 $d^*_0(t) > 0$.
\el
\end{theorem}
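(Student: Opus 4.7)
The plan is to prove (a) first, then deduce (b) from (a) via Theorem~\ref{thm0}(a), and finally obtain (c) by tracking uniformity through the proof of (a). The two main tools are Lemma~\ref{lem7}(a) and Lemma~\ref{lem0} applied to w*-slices; the proof of Lemma~\ref{lem0} works verbatim in the dual, as was tacitly used in the proof of Theorem~\ref{thm}, yielding: if $g \in S(B(X^*), x, \alpha)$ with $diam(S(B(X^*), x, \alpha)) < t/5$, then $s^*(g, x, t) \geq \min\{(g(x)-\alpha)/\alpha,\, t/20\} > 0$.

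For the forward direction of (a) and (c), assume $f$ is w*-semidenting, and for the given $t \in (0,2)$ choose $x \in S(X)$ and $\alpha \in (0,1)$ with $S := S(B(X^*), x, \alpha) \subseteq B(f, t/10)$; under UMIP, $\alpha$ is uniform in $f$. Any $\psi \in D(x)$ lies in $S$, so $\|f - \psi\| < t/10 < t$ and $diam(S) < t/5$. Lemma~\ref{lem0} then yields $s^*(\psi, x, t) \geq \min\{(1-\alpha)/\alpha,\, t/20\} > 0$, whence $d^*_0(f, t) \geq s^*(\psi, x, t) > 0$; uniformity in $f$ in the UMIP case gives $d^*_0(t) > 0$.

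For the backward direction of (a) (and hence (b)), given $\e > 0$ pick $t \in (0, \min\{\e/3, 2\})$; the hypothesis $d^*_0(f, t) > 0$ yields $g \in S(X^*)$ and $x \in S(X)$ with $\|f - g\| < t$ and $s^*(g, x, t) > 0$. The case $g(x) = 0$ is excluded: then $-g \in \ker x$ has $\|-g\| = 1 \geq t/4$, forcing $s^*(g, x, t) \leq \|g + (-g)\| - 1 = -1$. Since $s^*(g, x, t) = s^*(g, -x, t)$, replacing $x$ by $-x$ if necessary we may assume $g(x) > 0$. Lemma~\ref{lem7}(a) now provides a slice $T := S(B(X^*), x, g(x)(1-s^*(g,x,t)))$ with $diam(T) < 2t$; since $g \in T$, we obtain $T \subseteq B(g, 2t) \subseteq B(f, 3t) \subseteq B(f, \e)$. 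This proves (a), and (b) is then immediate from Theorem~\ref{thm0}(a).

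For the backward direction of (c), set $\beta := d^*_0(t) > 0$ and choose $g, x$ with the stronger bound $s^*(g, x, t) > \beta/2$. The same argument produces $T \subseteq B(f, \e)$, but now with the extra information $\gamma := g(x)(1-s^*(g,x,t)) \leq 1 \cdot (1 - \beta/2) = 1 - \beta/2 =: \alpha^* \in (0,1)$. Since $\alpha^*$ depends only on $\e$ and $\alpha^* \geq \gamma$, the larger slice still fits inside $T$: $S(B(X^*), x, \alpha^*) \subseteq T \subseteq B(f, \e)$, which is precisely the uniform w*-semidenting condition of Theorem~\ref{thm0}(b). The main obstacle is this last step: Lemma~\ref{lem7}(a) only delivers a slice whose defining parameter $\gamma$ varies with $g$ and $x$, and the crucial observation is that the uniform positive lower bound on $s^*(g, x, t)$ forces a uniform upper bound on $\gamma$, producing the required uniform $\alpha^*$.
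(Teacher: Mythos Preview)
Your proof is correct and follows essentially the same route as the paper's: both directions of (a) and (c) hinge on the dual versions of Lemma~\ref{lem0} and Lemma~\ref{lem7}(a), with the key uniformity observation in (c) that a uniform lower bound on $s^*(g,x,t)$ forces a uniform upper bound $1-\beta/2$ on the slice parameter. The only differences are cosmetic bookkeeping (you take $t<\e/3$ where the paper scales by $t/4$), and one harmless wording slip: since $\alpha^*\geq\gamma$, the slice $S(B(X^*),x,\alpha^*)$ is the \emph{smaller} one contained in $T$, not the larger.
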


\begin{proof}
$(a)$. Let $0 <t <2$ be given. Since $d^*_0(f, t/4) > 0$, there exist $x\in S(X)$ and $g\in S(X^*)$ such that $\|f - g\| <t/4$ and $\ds s^*(g, x, t/4) > 0$.
As before, $g(x) \neq 0$ and we may assume $g(x) > 0$.
By Lemma ~\ref{lem1}, we have a w*-slice $S$ of diameter smaller than $t/2$ containing $g$.
Now, $\|f - g\| <t/2$. Hence, we have $S \subseteq B(f, t)$.
Hence, $f$ is a w*-semidenting point.

Conversely, let $0 <t <2$ and $f$ be a w*-semidenting point. So, there exist $0 <\alpha <1$ and $x \in S(X)$ such that $S = S(B(X^*), x, \alpha) \subseteq B(f, t/10)$.
Choose $g_0 \in S$ such that $g_0 \in S(X)$. Then $g_0 \in S$ and $diam(S) < t/5$. By Lemma~\ref{lem0}, $s^*(g_0, x, t) >0$ and $\|g_0-f\| < t/10 <t$. Hence, $d^*_0(f, t) >0$.

$(b)$ follows from $(a)$ and Theorem~\ref{thm0} $(a)$.

$(c)$. Let $X$ has the UMIP. So, by Theorem~\ref{thm0} $(b)$, for any $t \in (0, 2)$, there exists $0 <\alpha(t/10) < 1 $ such that for every $f\in S(X^*)$, there is an $x\in S(X)$ such that
\[
S(B(X^*), x, \alpha(t/10)) \subseteq B(f, t/10).
\]
We have, $diam(S) <t/5$.
Choose $g_0 \in D(x)$. Then $g_0 \in S$. By Lemma~\ref{lem0}, $\ds s^*(g_0, x, t) \geq \beta(t) : = \min\left\{\frac{1}{\alpha(t)} -1, \frac{t}{20}\right\} > 0$.
Now, since $g_0 \in S$, $\|f - g_0\| <t$, and hence, $d^*_0(f, t) \geq \beta(t)$. It follows that $d^*_0(t) \geq \beta(t) > 0$.

Conversely, let $0 <t <2$. Let $\gamma(t) = \frac12 d^*_0(t/4) > 0$.
Let $f \in S(X^*)$. Then $d^*_0(f, t/4) \geq d^*_0(t/4) > \gamma(t)$.
So, there exists $x\in S(X)$ and $g\in S(X^*)$ such that $\|f-g\|<t/4$ and $\ds s^* = s^*(g, x, t/4) > \gamma(t)$. As before, changing $x$ to $-x$ if necessary, we may assume $g(x) > 0$.
Let $S := S(B(X^*), x, g(x) (1- s^*))$. Then $g \in S$ and by Lemma~\ref{lem1}, we have $diam(S) < t/2$. Since $\|f-g\|<t/4$, we have $S \subseteq B(f, t)$.
Also, $0< g(x) \leq 1$ and $\ds s^* > \gamma(t)$ imply that
\[
S(B(X^*), x, 1- \gamma(t)) \subseteq S(B(X^*), x, 1- s^*) \subseteq S \subseteq B(f, t).
\]
It follows that $\alpha(t) =1-\gamma(t)$ works.
\end{proof}

\begin{remark} \rm
Note that if $X$ has the UMIP, it is uniformly w*-semidenting, also since it has the MIP, the set $D$ of w*-denting points of $B(X^*)$ is norm dense in $S(X^*)$. However, it does not follow that these w*-denting points are uniform over $D$, that is, $\inf_{f\in D} d^*(f, t) > 0$, as this, by Proposition ~\ref{prp2}, would imply that $X^*$ is uniformly w*-denting, which, in general, is a much stronger condition, as finite dimensional examples can show.
\end{remark}

\section{Uniform w*-Denting and Hyperplane UMIP} \label{s4}

Now we will discuss a geometric property which is a stronger form of the UMIP, a uniform version of $``\mathcal{P}=\mathcal{H}"$ discussed in \cite{Gi}. We will call it Hyperplane UMIP (H-UMIP). We will show that this property is equivalent to $X^*$ being uniformly convex.

First, let us recall the definition of the condition ``$\mathcal{P}=\mathcal{H}$".

\begin{definition} \rm
We say that a Banach space $X$ has the Hyperplane MIP (H-MIP) if for every closed convex bounded set $C \subseteq X$ and $f \in S(X^*)$ such that $\inf f(C) >0$, there exists a closed ball $B[x_0, r_0]$ in $X$ such that $C \subseteq B[x_0, r_0]$ and $\inf f(B[x_0, r_0]) >0$.
\end{definition}

The following characterisation of the above property is already known:
\begin{theorem} \cite[Proposition 5.3, p. 196]{CL1}
$X$ has the H-MIP if and only if every $f \in S(X^*)$ is a w*-denting point of $B(X^*)$.
\end{theorem}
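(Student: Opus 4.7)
The plan is to prove both implications through the correspondence between closed balls in $X$ and w*-slices of $B(X^*)$: a closed ball $B[x_0, r_0]$ satisfies $\inf_{y \in B[x_0, r_0]} f(y) = f(x_0) - r_0 > 0$ if and only if $f$ lies in the w*-slice $\{g \in B(X^*) : g(x_0) > r_0\}$, which after normalisation is $S(B(X^*), x_0/\|x_0\|, r_0/\|x_0\|)$. Under this dictionary, H-MIP says that whenever $\inf f(C) > 0$ on a closed bounded convex $C$, there is a w*-slice $S$ through $f$ whose ``dual ball'' $B[x_0, r_0]$ encloses $C$, the containment being equivalent to $h_C(g) \leq g(x_0) + r_0\|g\|$ for every $g \in X^*$, where $h_C(g) = \sup_{y \in C} g(y)$.

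For $(\Leftarrow)$, suppose every $f \in S(X^*)$ is w*-denting. Given $C$ closed bounded convex with $\inf f(C) = 1$ (after scaling) and $M = \sup_{y \in C}\|y\|$, choose $\e_0 > 0$ small (of order $1/(M(M+1)(M+2))$) and use w*-denting to find $x \in S(X)$ and $\alpha_0 \in (0,1)$ with $f(x) > \alpha_0$ and $diam\,S(B(X^*), x, \alpha_0) < \e_0$. Setting $\eta_0 = 1 - \alpha_0$, the enlarged slice $S(B(X^*), x, 1 - K\eta_0)$ with $K = M+2$ still has diameter at most $K\e_0$ by Lemma~\ref{lem8}(b). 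Then try $x_0 = \lambda x$ with $\lambda$ in a suitable non-empty range of order $1/\eta_0$, and $r_0 = \sup_{y \in C}\|y - \lambda x\|$. Estimating $\|y - \lambda x\| = \sup_{g \in B(X^*)}[g(y) - \lambda g(x)]$ by splitting $g$ into (a) close to $f$, (b) close to $-f$, and (c) the ``middle'' $|g(x)| \leq 1 - K\eta_0$---case (c) being precisely the complement of the enlarged slice and its negative, and therefore controlled---the bounds from (b) and (c) can be made simultaneously smaller than $\lambda f(x)$, yielding the required $r_0 < f(x_0)$.

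For $(\Rightarrow)$, suppose H-MIP. Given $f_0 \in S(X^*)$ and $\e > 0$, pick $y_0 \in X$ with $f_0(y_0) = 1$ and $\|y_0\|$ close to $1$, and for $R$ large set $C_R = y_0 + \{z \in \ker f_0 : \|z\| \leq R\}$, a closed bounded convex set with $\inf f_0(C_R) = 1$. H-MIP yields $B[x_R, r_R] \supseteq C_R$ with $f_0(x_R) > r_R$, and $V_R = \{g \in B(X^*) : g(x_R) > r_R\}$ is a w*-slice through $f_0$. Dualising $C_R \subseteq B[x_R, r_R]$ gives $g(y_0) + R\|g|_{\ker f_0}\| \leq g(x_R) + r_R$ for every $g \in B(X^*)$, so $\|g|_{\ker f_0}\|$ for $g \in V_R$ is at most of order $\|x_R\|/R$. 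Combined with the fact that the ball must be almost tangent to $\ker f_0$ in the $f_0$-direction (forcing $r_R/\|x_R\| \to 1$ as $R \to \infty$), this squeezes $diam\,V_R$ to $0$ for $R$ sufficiently large.

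The main obstacle is the quantitative step in $(\Rightarrow)$: showing $diam\,V_R \to 0$. In Hilbert space this is immediate from the parallelogram identity, but in a general Banach space one must extract from H-MIP, which only gives existence of \emph{some} ball, the sharp asymptotic $r_R/\|x_R\| \to 1$ together with $\|x_R\| \to \infty$ at the right rate; this is carried out in~\cite{CL1}.
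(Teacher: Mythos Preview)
The paper does not actually prove this statement: it is quoted verbatim as \cite[Proposition~5.3, p.~196]{CL1} and no argument is supplied. So there is no ``paper's own proof'' to compare your proposal against; the paper simply defers to the reference, exactly as you end up doing in your final paragraph.

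As for the proposal itself, the $(\Leftarrow)$ sketch is in the right spirit and mirrors the ball-construction argument the paper carries out in detail in the uniform setting (Theorem~\ref{thm5}, $(d)\Rightarrow(e)$). One point to watch: you take $\lambda$ of order $1/\eta_0$, but the upper constraint coming from case~(b) is of order $1/\e_0$, and a priori $\eta_0 \leq \e_0$ (Lemma~\ref{lem8}(a)) goes the wrong way for the range to be nonempty. This is fixable---one can first enlarge the slice so that $\eta_0$ and $\e_0$ are comparable, or argue as in \cite[Theorem~2.6]{BGG}---but the sketch as written glosses over it.

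The $(\Rightarrow)$ direction is where your proposal is explicitly incomplete: you correctly identify the obstacle (extracting $r_R/\|x_R\|\to 1$ and $\|x_R\|\to\infty$ at a compatible rate from mere existence of a separating ball) and then say ``this is carried out in~\cite{CL1}.'' That is not a proof but a citation, so in the end your proposal and the paper are doing the same thing---pointing to \cite{CL1}. If you want a genuinely self-contained argument, you need to close that gap; otherwise the honest statement is that you are reproving only the easier half.
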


\begin{definition} \rm
We say that $X$ has the Hyperplane UMIP (H-UMIP) if for every $\e >0$ and $M(\e) \geq 1$, there is $K(\e) > 0$ such that whenever a closed convex set $C\subseteq X$ and $f \in S(X^*)$ are such that $\sup\{\|x\| : x \in C\} \leq M(\e)$ and $\inf f(C) \geq \e$, there is a closed ball $B[x_0, r_0]$ in $X$ such that $C \subseteq B[x_0, r_0]$, $\inf f(B[x_0, r_0]) \geq \e/2$ and $r_0 \leq K$.
\end{definition}

\begin{remark} \rm
Note that unlike the UMIP, in this case, the uniform bound is not on the diameter of $C$, but its farthest distance from the point being separated, which is an apparently stronger assumption.
\end{remark}

\begin{definition} \rm
$X^*$ is said to be uniformly w*-denting (I) if for every $\e >0$, there exists $0 < \alpha < 1$ such that for every $f \in S(X^*)$, there is $x \in S(X)$ such that
\[
f(x) > \alpha \mbox{ and } diam (S(B(X^*), x, \alpha)) <\e.
\]
\end{definition}

\begin{lemma} \label{lem4}
If $X^*$ is uniformly w*-denting (I), then for every $\e >0$, there exist $0 < \beta < \alpha < 1$ such that for every $f \in S(X^*)$, there exists $x \in S(X)$ satisfying
\[
diam (S(B(X^*), x, \beta)) <\e \mbox{ and } f(x) >\alpha.
\]
\end{lemma}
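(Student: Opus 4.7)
The plan is to bootstrap from a sharpened instance of the uniform w*-denting (I) hypothesis and then enlarge the slice by lowering its level, keeping the diameter under control via Lemma \ref{lem8}$(b)$. By monotonicity of the conclusion in $\e$ (any pair $\alpha,\beta$ that works for a smaller tolerance also works for a larger one), it suffices to prove the statement for $\e \in (0,1]$.

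Given such an $\e$, apply the uniform w*-denting (I) hypothesis to $\e/2$ to obtain $\alpha' \in (0,1)$ with the property that for every $f \in S(X^*)$ there exists $x \in S(X)$ (depending on $f$) satisfying $f(x) > \alpha'$ and $diam (S(B(X^*), x, \alpha')) < \e/2$. Lemma \ref{lem8}$(a)$ then forces $\alpha' \geq 1 - \e/2 \geq 1/2$, a quantitative lower bound on $\alpha'$ that makes the trade-off below feasible.

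Now set $\alpha := \alpha'$ and $\beta := (3\alpha' - 1)/2$. A direct computation gives $1 - \beta = \tfrac{3}{2}(1-\alpha')$, so using $\alpha' \geq 1/2$ we have $0 < \beta < \alpha < 1$ and $(1-\beta)/(1-\alpha) = 3/2$. Applying Lemma \ref{lem8}$(b)$ to the very same $x$ produced above,
\[
diam (S(B(X^*), x, \beta)) \;\leq\; \frac{3}{2} \cdot \frac{\e}{2} \;=\; \frac{3\e}{4} \;<\; \e,
\]
while $f(x) > \alpha' = \alpha$ is already built in from Step 2, completing the proof.

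No step is genuinely hard here; the whole argument is a routine quantitative trade-off between slice level and slice diameter, and Lemma \ref{lem8}$(b)$ does the work. The only point requiring care is ensuring $\beta > 0$ together with $\beta < \alpha$, which is precisely why one feeds $\e/2$ (rather than $\e$) into the hypothesis and restricts to small $\e$ at the outset, so that Lemma \ref{lem8}$(a)$ yields $\alpha' \geq 1/2$.
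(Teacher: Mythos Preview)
Your proof is correct and follows essentially the same approach as the paper: apply the hypothesis at tolerance $\e/2$ and then invoke Lemma~\ref{lem8}$(b)$ to lower the slice level from $\alpha$ to some $\beta<\alpha$ while keeping the diameter below $\e$. The only cosmetic difference is that the paper simply picks any $\beta$ with $\max(0,2\alpha-1)<\beta<\alpha$ (this interval is always nonempty since $\alpha<1$), whereas you fix the explicit value $\beta=(3\alpha-1)/2$ and then need Lemma~\ref{lem8}$(a)$ and the reduction to $\e\le 1$ to guarantee $\beta>0$; that extra step is unnecessary but harmless.
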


\begin{proof}
Let $X^*$ be uniformly w*-denting (I). Let $\e >0$ be given. There exists $\alpha >0$ such that given $f \in S(X^*)$, there exists $x \in S(X)$ such that
\[
diam (S(B(X^*), x, \alpha)) <\e/2 \mbox{ and } f(x) >\alpha.
\]
Let $\beta > 0$ be such that $\alpha > \beta > 2\alpha -1$. By Lemma \ref{lem8} $(b)$, we have that
\[
diam (S(B(X^*), x, \beta)) \leq \left(\frac{1-\beta}{1-\alpha}\right) diam (S(B(X^*), x, \alpha)) < \e.
\]
\end{proof}

For $\e >0$, let
\[
F_{\e}: = B(X^*) \setminus \bigcup\{S : S \mbox{ is a w*-slice of } B(X^*) \mbox{ with } diam(S) < \e\}.
\]

\begin{theorem} \label{thm5}
The following statements are equivalent~: \bla
\item $X^*$ is uniformly w*-denting.
\item For every $\e >0$, there exists $\del(\e) >0$ such that
\[
F_{\e} \subseteq (1-\del(\e))B(X^*).
\]
\item $X^*$ is uniformly convex.
\item $X^*$ is uniformly w*-denting (I).
\item $X$ has the H-UMIP.
\el
\end{theorem}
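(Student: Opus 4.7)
My plan is to establish the cycle (c) $\Rightarrow$ (d) $\Rightarrow$ (a) $\Rightarrow$ (b) $\Rightarrow$ (c), which ties all four analytic statements together, and then prove (a) $\Leftrightarrow$ (e) separately via a duality argument mirroring the proof of Theorem \ref{thm0}(b). The machinery routes through the moduli introduced in Section 3 and through the geometric Lemmas \ref{lem7}, \ref{lem8}, \ref{lem4}.

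For (c) $\Rightarrow$ (d), given $\e > 0$, I pick $\rho < \min\{\delta_{X^*}(\e/3),\,\e/4\}$ and set $\alpha := 1-\rho$. For any $f \in S(X^*)$, choose $x \in S(X)$ with $f(x) > \alpha$; for $g,h \in S(B(X^*),x,\alpha)$, passing to the normalizations $\tilde g := g/\|g\|,\,\tilde h := h/\|h\| \in S(X^*)$ one has $\|g - \tilde g\| = 1 - \|g\| < \rho$ and $(\tilde g + \tilde h)(x)/2 > 1 - \rho$; uniform convexity then forces $\|\tilde g - \tilde h\| < \e/3$, whence $\|g - h\| < \e$. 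For (d) $\Rightarrow$ (a), Lemma \ref{lem4} produces $0 < \beta < \alpha < 1$ such that each $f \in S(X^*)$ admits an $x$ with $f(x) > \alpha$ and $\text{diam}(S(B(X^*),x,\beta)) < t/5$; the w*-analogue of Lemma \ref{lem0} (as already used inside the proof of Theorem \ref{thm}) then yields $s^*(f,x,t) \geq \min\{(\alpha-\beta)/\beta,\,t/20\} > 0$ uniformly in $f$, so $d^*(t) > 0$.

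For (a) $\Rightarrow$ (b), Theorem \ref{thm} at $t = \e/2$ produces $\delta_1 > 0$ such that each $f_0 \in S(X^*)$ sits in a w*-slice $S(B(X^*),x,f_0(x)(1-\delta_1))$ of diameter $< \e/2$; for any $g \in B(X^*)$ with $\|g\| > 1-\delta_1$, setting $f_0 := g/\|g\|$ and computing $g(x) = \|g\|f_0(x) > (1-\delta_1)f_0(x) = f_0(x)(1-\delta_1)$ places $g$ in this same small slice, so $g \notin F_\e$, giving $F_\e \subseteq (1-\delta_1)B(X^*)$. For (b) $\Rightarrow$ (c), the slickest step, take $g,h \in S(X^*)$ with $\|g-h\| \geq \e$ and $m := (g+h)/2$ satisfying $\|m\| > 1 - \delta^*$, where $\delta^* := \delta(\e/3)$ comes from (b) applied with parameter $\e/3$. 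Then $m \notin F_{\e/3}$, so $m$ lies in some w*-slice $S = S(B(X^*),x,\alpha)$ of diameter $< \e/3$; Lemma \ref{lem8}(a) gives $\alpha \geq 1 - \e/3$. From $(g+h)(x)/2 > \alpha$ and $g(x),h(x) \leq 1$ one extracts $g(x), h(x) > 2\alpha - 1$, placing $g,h$ in the slice $S(B(X^*),x,2\alpha - 1)$, whose diameter is at most $2 \cdot \e/3$ by Lemma \ref{lem8}(b). Hence $\|g-h\| \leq 2\e/3 < \e$, a contradiction; this quantifies $\delta_{X^*}(\e) \geq \delta^* > 0$.

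Finally, (a) $\Leftrightarrow$ (e) follows the template of Theorem \ref{thm0}(b): a ball $B[x_0,r_0]$ witnessing the H-UMIP for $C$ and $f \in S(X^*)$ with $\inf f(C) \geq \e$ separates $C$ from the half-space $\{z : f(z) < \e/2\}$, and rescaling by $r_0$ identifies this geometry with a uniform w*-slice of $B(X^*)$ capturing $f$, i.e., uniform w*-denting; conversely, such a uniform slice yields the required ball. The norm-versus-diameter discrepancy between the hypotheses of H-UMIP and UMIP is precisely why this property characterises w*-denting of $S(X^*)$ rather than merely w*-semidenting. The most delicate step overall is the normalisation in (c) $\Rightarrow$ (d), where $\delta_{X^*}$ must be applied to sub-unit elements of the slice via their projections onto $S(X^*)$ — this is the main obstacle I anticipate.
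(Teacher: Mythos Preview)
Your cycle (c)$\Rightarrow$(d)$\Rightarrow$(a)$\Rightarrow$(b)$\Rightarrow$(c) is correct; the paper instead runs the single cycle (a)$\Rightarrow$(b)$\Rightarrow$(c)$\Rightarrow$(d)$\Rightarrow$(e)$\Rightarrow$(a). Two remarks on the overlap: your (b)$\Rightarrow$(c) is more laboured than needed---the paper simply notes that if $m=(g+h)/2$ lies in a w*-slice $S$ of diameter $<\e/2$, then since $(g(x)+h(x))/2>\alpha$ at least one of $g,h$ lies in $S$ as well, so $\|(g-h)/2\|<\e/2$ directly, with no recourse to Lemma~\ref{lem8}. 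Your (d)$\Rightarrow$(a) via Lemmas~\ref{lem4} and~\ref{lem0} is a legitimate shortcut the paper does not take (it closes the loop through~(e) instead).

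The genuine gap is the equivalence with (e). Your one-line sketch---``rescaling by $r_0$ identifies this geometry with a uniform w*-slice\dots conversely, such a uniform slice yields the required ball''---is not a proof, and invoking the ``template of Theorem~\ref{thm0}(b)'' does not suffice: the adaptation from UMIP to H-UMIP is exactly where the substance lies. The implication (analytic condition)$\Rightarrow$(e) is the longest and most technical step in the paper. Carried out as (d)$\Rightarrow$(e), it requires a \emph{double} application of uniform w*-denting~(I): first to produce $x_1$ with a slice of diameter $<\e/4M_1$ (where $M_1=M+3\e/4$) containing $f$, then $x_2$ with a much thinner slice of diameter $<\del/4k$; one then widens the $x_2$-slice via Corollary~\ref{cor2} to level $\eta=1-2k(1-\gamma)$ while keeping it nested inside the $x_1$-slice, and only afterwards invokes the ball construction of \cite[Theorem~2.6]{BGG} to obtain $B[\lambda x_2,\lambda-d-3\e/4]$ with the explicit radius control $\lambda\leq K=M_1/2k(1-\gamma)$. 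The reverse (e)$\Rightarrow$(a) likewise requires a specific test set (the paper takes $A=\{x\in B(X):f(x)\geq\e/4\}$) and a quantitative passage from the radius bound $K$ to the slice parameter $\alpha=K/(K+\e/9)$. Your claim that (c)$\Rightarrow$(d) is ``the most delicate step'' is therefore misplaced: that step is routine, whereas the link to (e) is where the real work lies and your proposal leaves it unaddressed.
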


\begin{proof}
$(a) \implies (b)$.
Let $X^*$ be uniformly w*-denting and $\e >0$. By Theorem ~\ref{thm}, there exists $\del >0$ such that for every $f \in S(X^*)$, there is $x \in S(X)$ such that
\[
diam (S(B(X^*), x, f(x)(1-\del))) <\e.
\]

Let $f \in F_{\e}$. Now, for $h = f/\|f\|$, there exists $x \in S(X)$ such that
\[
diam (S(B(X^*), x, h(x)(1-\del))) <\e.
\]

Since, $f \notin S(B(X^*), x, h(x)(1-\del))$, $f(x) \leq h(x)(1-\del)$. Then, $\|f\| \leq 1-\del$. Therefore,
\[
F_{\e} \subseteq (1-\del) B(X^*).
\]

$(b) \implies (c)$. Let $\e >0$ be given. Let $\del > 0$ be such that
\[
F_{\e/2} \subseteq (1-\del) B(X^*).
\]

\textsc{Claim~:} If $f, g \in S(X^*)$ and $\|f-g\| \geq \e$, then $\ds \left\|\frac{f+g}{2}\right\| \leq 1-\del$.

Suppose not. Then, $\ds \left\|\frac{f+g}{2}\right\| > 1-\del$ and hence, $\ds \frac{f+g}{2} \notin F_{\e/2}$.

Thus, there exists $S = S(B(X^*), x, \alpha)$ such that $\ds \frac{f+g}{2} \in S$ and $diam(S) < \e/2$.

Now, $\ds \frac{f(x)+g(x)}{2} > \alpha$ implies either $f(x) > \alpha$ or $g(x) > \alpha$, that is, at least one of $f$ or $g \in S$.

Since $\ds \frac{f+g}{2} \in S$ and $diam(S) < \e/2$, $\ds \left\|\frac{f-g}{2}\right\| < \e/2$, or $\ds \left\|f-g\right\| < \e$. Contradiction.
So, $X^*$ is uniformly convex.

$(c) \implies (d)$.
Let $\e >0$ be given. So, there exists $0 <\del \leq \e/4$ such that for any $f, g \in S(X^*)$ with
\[
\frac{\|f+g\|}{2} >1-\del,
\]
we have $\|f-g\| \leq \e/4$.

Let $f \in S(X^*)$. Choose $x \in S(X)$ such that $f(x)=1$ (this is possible since, because of the uniform convexity of $X^*$, $X$ is reflexive). Let $g \in B(X^*)$ be such that $g(x) > 1-\del$. We have $\|g\| >1-\del \geq 1-\e/4$ and therefore, $1-\|g\| <\e/4$ and $\frac{g}{\|g\|}(x) > \frac{1}{\|g\|}(1-\del) >1-\del$.

Now,
\beqa
\frac{\|f+(g/\|g\|)\|}{2} & \geq & \frac{(f+(g/\|g\|))(x)}{2}=\frac{1}{2}(1+(g/\|g\|)(x))\\
& > & \frac{1}{2}(1+1-\del) = 1-\del/2 >1-\del.
\eeqa
Then, $\|f-\frac{g}{\|g\|}\| <\e/4$.
\[
\|f-g\|=\|f-\frac{g}{\|g\|}\|+1-\|g\| <\e/2.
\]

Hence, $diam (S(B(X^*), x, 1-\del)) <\e$ and $f \in S(B(X^*), x, 1-\del)$. So, $X^*$ is uniformly w*-denting (I).

$(d) \implies (e)$.
Let $\e >0$ and $M \geq 2$ be given. Let $M_1 := M + 3\e/4$.
Since $X^*$ is uniformly w*-denting (I), by Lemma~\ref{lem4}, there exist $0 < \alpha < 1$ and $\del >0$ such that for every $f \in S(X^*)$, there exists $x \in S(X)$ satisfying
\[
diam (S(B(X^*), x, \alpha - \del)) <\e/4M_1 \mbox{ and } f(x) >\alpha.
\]

Choose $k \in \mathbb{N}$ such that $1/2k <\e/4M_1$. Again, there exists $0 < \gamma < 1$ such that for every $f \in S(X^*)$, there is $x \in S(X)$ such that
\[
f(x) > \gamma \mbox{ and } diam (S(B(X^*), x, \gamma)) <\del/4k.
\]

Then, by Lemma~\ref{lem8} $(a)$, $(1 - \gamma) \leq \del/4k <1/2k <\e/4M_1$. So, $1-2k(1-\gamma) >0$.

It suffices to show that for $K=\frac{M_1}{2k(1 - \gamma)}$, whenever a closed convex set $C$ and $f \in S(X^*)$ are such that $\sup\{\|x\| : x \in C\} \leq M$ and $\inf f(C) \geq \e$, there is a closed ball $B[x_0, r_0]$ in $X$ such that $C \subseteq B[x_0, r_0]$, $\inf f(B[x_0, r_0]) \geq \e/2$ and $r_0 \leq K$.

Let $C$ and $f \in S(X^*)$ be as above. For $f$, there exists $x_1 \in S(X)$ such that
\[
diam(S(B(X^*), x_1, \alpha-\del)) <\e/4M_1 \mbox{ and } f(x_1) >\alpha.
\]

Again, there exists $x_2 \in S(X)$ such that
\[
diam (S(B(X^*), x_2, \gamma)) <\del/4k \mbox{ and } f(x_2) >\gamma.
\]

If $g \notin S(B(X^*), x_1, \alpha-\del/2)$, then $\|f-g\| \geq f(x_1) - g(x_1) > \del/2$.

Let $\eta = 1-2k(1-\gamma) > 0$. By Corollary ~\ref{cor2},
\[
diam (S(B(X^*), x_2, \eta)) \leq 2k (diam (S(B(X^*), x_2, \gamma))) < \del/2.
\]
Since $f \in S(B(X^*), x_2, \eta)$, it follows that $g \notin S(B(X^*), x_2, \eta)$. Thus, we obtain that
\beqa
f \in S(B(X^*), x_2, \gamma) & \subseteq & S_1 = S(B(X^*), x_2, \eta) \\
& \subseteq & S(B(X^*), x_1, \alpha-\del/2) \subseteq S(B(X^*), x_1, \alpha-\del).
\eeqa
So, $diam (S_1) <\e/4M_1$.

Define $D = \overline{C + 3\e/4 B(X)}$. Then $D \subseteq B[0, M_1]$ and $\inf f(D) \geq \e/4$. Hence, for $z \in D$ and $g \in B(X^*) \cap B(f, \e/4M_1)$,
\[
g(z) \geq f(z) - \|f-g\| \|z\| > \e/4 - \frac{\e M_1}{4M_1}=0.
\]
Therefore,
\[
S_1 \subseteq B(X^*) \cap B(f, \e/4M_1) \subseteq \{g \in B(X^*) : g(z) > 0 \mbox{ for all } z \in D\}.
\]

Let
\[
d = \frac{d(0, D)(1-\eta)}{1+\eta} > 0.
\]
Then as in \cite[Theorem 2.6]{BGG}, for $\lambda = M_1/(1-\eta) = M_1/2k(1-\gamma)$,
\[D \subseteq B \left[\lambda x_2, \lambda - d \right].
\]
i.e.
\[
C \subseteq B \left[\lambda x_2, \lambda - d - \frac{3\e}{4}\right] = B_1.
\]

Since, $1/2k < \e/4M_1$, i.e., $M_1/2k < \e/4$, we have
\beqa
\inf f (B_1) & = & f(\lambda x_2) - \lambda + d + \frac{3\e}{4} > \lambda (\gamma - 1) + d + \frac{3\e}{4}\\
& = & -\frac{M_1}{2k} + d + \frac{3\e}{4} \geq d + \frac{3\e}{4}-\e/4 >\e/2.
\eeqa

Also, \[
\frac{M_1}{2k(1 - \gamma)}- d - \frac{3\e}{4} \leq \frac{M_1}{2k(1 - \gamma)} = K.
\]
Hence, $X$ has the H-UMIP.

$(e) \implies (a)$.
Let $0< \e < 1$ and $M(\e) \geq 1$ be given. Then, there is $K = K(\e) > 0$ such that whenever a closed convex set $C\subseteq X$ and $f \in S(X^*)$ are such that $\sup\{\|x\| : x \in C\} \leq M(\e)$ and $\inf f(C) \geq \e/4$, there is a closed ball $B[x_0, r_0]$ in $X$ such that $C \subseteq B[x_0, r_0]$, $\inf f(B[x_0, r_0]) \geq \e/8$ and $r_0 \leq K$. We may assume w.l.o.g.\ that $K \geq 1$.

Let $f \in S(X^*)$. Consider $A := \{x\in B(X) : f(x) \geq \e/4\}$. Then, $\inf f(A) = \e/4$. Also, $\sup\{\|x\| : x \in C\} \leq 1 \leq M(\e)$. So, there exist $B = B[x_0, r]$ containing $A$ such that $r \leq K$ and $\inf f(B) \geq \e/8$. Put $\alpha = K/(K+\e/9)$. We claim that $\del = 1- \alpha$ works.
Note that $r/(r+\e/9) \leq K/(K+\e/9) = \alpha$.

Since $\inf f(B) \geq \e/8$ we have $f(x_0) \geq r + \e/8 > r + \e/9$.

Put $z_0 = x_0/\|x_0\|$. Let $S = S(B(X^*), z_0, f(z_0) (1-\delta))$.

Let $g \in S$. Then, $g(z_0) > f(z_0) \alpha$, which implies, $g(x_0) > f(x_0) \alpha > f(x_0)(r/(r+\e/9)) \geq r$. That is, $\inf g(B) > 0$. So,
\[
\{x\in B(X) : f(x) > \e/3\} \subseteq A \subseteq B \subseteq \{x: g(x) > 0\}.
\]
So, by \cite[Lemma 2.5]{BGG}, $\|f-g\| \leq 2\e/3$.
But $f \in S$.
Therefore,
\[
diam (S) < 4\e/3.
\]
So, $X^*$ is uniformly w*-denting.
\end{proof}

\begin{remark} \rm
$(a) \implies (b)$ above is hinted in \cite[p 11]{DL}, but the details are from a personal communication from Late Prof.\ Sudipta Dutta.

$(b) \implies (c)$ was pointed out to us by Prof.\ Gilles Lancien recently.
\end{remark}

The following quantitative estimates follow from the proof of Theorem~\ref{thm5}.

\begin{corollary} \label{ob1} \rm
If $d^*(t) >0$ for every $t >0$, then, for every $0 <t <2$,
\bla
\item $\del_{X^*}(t) \geq d^*(t/4)$ and 
\item $d^*(t)\geq \del_{X^*}(t/20)$.
\el
\end{corollary}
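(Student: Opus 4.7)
The plan is to trace the proof of Theorem~\ref{thm5}, where the equivalence between $X^*$ being uniformly $w^*$-denting and $X^*$ being uniformly convex is established, and to keep careful track of the constants at each step.

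For part (a), I would follow the chain $(a) \implies (b) \implies (c)$ of Theorem~\ref{thm5}. The step $(a) \implies (b)$, via Theorem~\ref{thm} and Lemma~\ref{lem7}(a), can be sharpened to the inclusion $F_\e \subseteq (1 - d^*(\e/2))B(X^*)$: for $f \in F_\e$ with $f \neq 0$, pick $x \in S(X)$ realising $s^*(f/\|f\|, x, \e/2) > d^*(\e/2) - \eta$ for any $\eta > 0$; then Lemma~\ref{lem7}(a) produces a $w^*$-slice around $f/\|f\|$ of diameter less than $\e$, and since $f$ lies outside this slice one gets $\|f\| \leq 1 - d^*(\e/2) + \eta$, whence $\|f\| \leq 1 - d^*(\e/2)$ on letting $\eta \downarrow 0$. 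The step $(b) \implies (c)$ applied with $\e = t/2$ then yields $\delta_{X^*}(t) \geq d^*(t/4)$: for $f, g \in S(X^*)$ with $\|f - g\| \geq t$, the assumption $\|(f+g)/2\| > 1 - d^*(t/4)$ would place $(f+g)/2$ outside $F_{t/2}$, hence inside some $w^*$-slice $S$ of diameter less than $t/2$ with $(f(x)+g(x))/2 > \alpha$; then one of $f, g$ belongs to $S$, forcing $\|f - g\| < t$, a contradiction.

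For part (b), I would combine the step $(c) \implies (d)$ of Theorem~\ref{thm5} with the $w^*$-analogue of Lemma~\ref{lem0}. Fix $f \in S(X^*)$; by reflexivity of $X$ (a consequence of $X^*$ being uniformly convex) pick $x \in S(X)$ with $f(x) = 1$. Set $\delta := \delta_{X^*}(t/20)$; the universal bound $\delta_{X^*}(s) \leq s/2$ gives $\delta \leq t/40$, which lets the $(c) \implies (d)$ argument run with input $\e = t/5$: every $g \in B(X^*)$ with $g(x) > 1-\delta$ is at distance less than $t/10$ from $f$, whence
\[
diam\big(S(B(X^*), x, 1-\delta)\big) < t/5.
\]
Applying the $w^*$-analogue of Lemma~\ref{lem0} with $\alpha = 1-\delta$ then gives
\[
s^*(f, x, t) \;\geq\; \min\!\left\{\frac{\delta}{1-\delta},\; \frac{t}{20}\right\} \;\geq\; \delta_{X^*}(t/20),
\]
using $\delta/(1-\delta) \geq \delta$ together with $\delta \leq t/40 \leq t/20$. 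Taking the infimum over $f \in S(X^*)$ yields $d^*(t) \geq \delta_{X^*}(t/20)$.

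The main technical nuisance is in part (a): the supremum defining $d^*(f, t)$ is not attained, so one must carry an $\eta$-slack and pass to the limit to reach the clean inclusion $F_\e \subseteq (1 - d^*(\e/2))B(X^*)$. In part (b) the main point is to identify the minimum in Lemma~\ref{lem0} with $\delta_{X^*}(t/20)$, which rests on the universal modulus bound $\delta_{X^*}(s) \leq s/2$ and the freedom to choose $x \in S(X)$ normalising $f$.
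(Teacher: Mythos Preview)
Your proposal is correct and follows essentially the same route as the paper: part~(a) traces the chain $(a)\Rightarrow(b)\Rightarrow(c)$ of Theorem~\ref{thm5} to obtain $F_{t/2}\subseteq(1-d^*(t/4))B(X^*)$ and then $\delta_{X^*}(t)\geq d^*(t/4)$, while part~(b) combines the $(c)\Rightarrow(d)$ step with Lemma~\ref{lem0} at $\alpha=1-\delta_{X^*}(t/20)$. The only differences are cosmetic: you carry the $\eta$-slack explicitly in part~(a), which the paper elides, and you invoke the sharper bound $\delta_{X^*}(s)\leq s/2$ where the paper uses $\delta_{X^*}(s)\leq s$; both suffice for identifying the minimum in Lemma~\ref{lem0}.
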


\begin{proof}
If $d^*(t) >0$ for every $t >0$, by Theorem~\ref{thm5}, $\del_{X^*}(t) >0$ for every $t >0$. In particular, $X$ is reflexive.

$(a)$. As in the proof of $(a) \implies (b)$ above, if $d^*(t) >0$ for every $t >0$, for every $f \in S(X^*)$, there is $x \in S(X)$ such that
\[
diam (S(B(X^*), x, f(x)(1-d^*(t/4)))) <t/2.
\]
and hence, 
\[
F_{t/2} \subseteq (1-d^*(t/4))B(X^*).
\]
Now, the proof of $(b) \implies (c)$ above gives us that for any $f, g \in S(X^*)$ with
$\|(f+g)/2\| >1-d^*(t/4)$ we have $\|f-g\| <t$.

Hence, $\del_{X^*}(t) \geq d^*(t/4)$.

$(b)$. For notational convenience, let us put $\delta = \del_{X^*}(t/20)$. So, for any $f, g \in S(X^*)$ with
\[
\frac{\|f+g\|}{2} >1-\del,
\]
we have $\|f-g\| \leq t/20$.

Let $f \in S(X^*)$ and $x \in S(X)$ be such that $f(x)=1$. As in the proof of $(c) \implies (d)$ above, it follows that
$diam (S(B(X^*), x, 1-\delta)) < t/5$ and $f \in S(B(X^*), x, 1-\delta)$.

It is not hard to observe that $\del_{X^*}(t) \leq t$ for all $t >0$.
Since $0 <\delta \leq t/20 <1$, by Lemma ~\ref{lem0}, we have
\[
d^*(f, t) \geq s^*(f,x,t) \geq \min\left\{\frac{{\delta}}{1-\delta},t/20\right\} \geq \delta.
\]

Since, this holds for all $f \in S(X^*)$, we have $d^*(t)\geq \delta$.
\end{proof}

\section{Modulus of the UMIP by Slices}
In this section, we define another modulus of w*-semidenting points and characterise MIP and UMIP in its terms.

\begin{definition} \rm
For $f \in S(X^*)$, $x \in S(X)$ and $0 < t < 1$, define
\beqa
\beta(f, x, t) & := & \inf\{1- g(x) : g \in B(X^*), \|f-g\| \geq t\},\\
\beta(f, t) & := & \sup_{x \in S(X)} \beta(f, x, t) \mbox{ and, }\\
\beta(t) & := & \inf_{f \in S(X^*)} \beta(f, t).
\eeqa
\end{definition}

\begin{lemma} \label{lem6}
Let $x \in S(X)$, $f \in S(X^*)$ and $0 <t <1$ be such that $\beta(f, x, t) >0$, then
\[
S(B(X^*), x, 1-\beta(f, x, t)) \subseteq B(f, t)
\]
\end{lemma}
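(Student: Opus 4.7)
The proof is essentially an unwinding of the definition via contrapositive, so I would present it in a single short paragraph rather than a multi-step argument. Specifically, write $\beta = \beta(f,x,t)$ and let $g \in S(B(X^*), x, 1-\beta)$, i.e., $g \in B(X^*)$ with $g(x) > 1 - \beta$, equivalently $1 - g(x) < \beta$. The goal is to show $\|f-g\| < t$.

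Suppose for contradiction that $\|f-g\| \geq t$. Then $g$ is one of the functionals in the set
\[
\{h \in B(X^*) : \|f-h\| \geq t\}
\]
over which the infimum defining $\beta(f,x,t)$ is taken, so
\[
1 - g(x) \geq \inf\{1 - h(x) : h \in B(X^*), \|f-h\| \geq t\} = \beta,
\]
contradicting $1 - g(x) < \beta$. Hence $\|f-g\| < t$, which is the desired inclusion.

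There is no real obstacle here: once the slice membership condition $g(x) > 1-\beta$ is rewritten as $1 - g(x) < \beta$, the statement is literally the contrapositive of the inequality packaged into the definition of $\beta(f,x,t)$. No appeal to earlier lemmas (Lemma~\ref{lem1}, Lemma~\ref{lem0}, etc.) is needed; this lemma is preparatory, intended to be combined later with lower bounds on $\beta(f,x,t)$ to control the diameter of slices and thereby characterize the MIP/UMIP in terms of the modulus $\beta$.
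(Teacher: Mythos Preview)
Your proof is correct and matches the paper's own argument essentially verbatim: take $g$ in the slice, rewrite slice membership as $1 - g(x) < \beta(f,x,t)$, and conclude $\|f-g\| < t$ directly from the definition of $\beta(f,x,t)$. The paper's proof is two lines and does not even spell out the contrapositive step, but the content is identical.
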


\begin{proof}
Let $g \in S(B(X^*), x, 1-\beta(f, x, t))$. Then $1 - g(x) < \beta(f, x, t)$. So, by definition of $\beta(f, x, t)$,
$\|f - g\| < t$.
\end{proof}

\begin{theorem} \label{thm7}
$X$ has MIP if and only if for every $f \in S(X^*)$ and $0 <t <1$, $\beta(f, t) >0$.
\end{theorem}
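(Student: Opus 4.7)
The plan is to prove this by combining Theorem~\ref{thm0}(a), which reduces the MIP to every $f\in S(X^*)$ being w*-semidenting, with Lemma~\ref{lem6}, which is already set up to convert positivity of $\beta(f,x,t)$ into a w*-slice contained in $B(f,t)$. Both directions then become one-line contrapositive arguments about membership in slices.

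For the forward direction, assume $X$ has the MIP. Fix $f \in S(X^*)$ and $t \in (0,1)$. By Theorem~\ref{thm0}(a) there exist $\alpha \in (0,1)$ and $x\in S(X)$ with $S(B(X^*),x,\alpha) \subseteq B(f,t)$. Now for any $g \in B(X^*)$ with $\|f-g\| \geq t$, we have $g \notin B(f,t)$ (since $B(f,t)$ is the open ball), hence $g \notin S(B(X^*),x,\alpha)$, so $g(x) \leq \alpha$ and therefore $1-g(x) \geq 1-\alpha$. Taking infimum over such $g$ yields $\beta(f,x,t) \geq 1-\alpha > 0$, and consequently $\beta(f,t) \geq \beta(f,x,t) > 0$.

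For the converse, suppose $\beta(f,t) >0$ for every $f \in S(X^*)$ and every $t \in (0,1)$. To conclude via Theorem~\ref{thm0}(a), I show that every $f \in S(X^*)$ is w*-semidenting. Fix $f$ and $\varepsilon >0$; since semidenting is a condition that only gets easier as $\varepsilon$ grows, I may assume $\varepsilon <1$. By hypothesis $\beta(f,\varepsilon) > 0$, so there exists $x \in S(X)$ with $\beta(f,x,\varepsilon) > 0$. Set $b := \min\{\beta(f,x,\varepsilon),\,1/2\} > 0$ and $\alpha := 1-b \in (0,1)$; since $\alpha \geq 1-\beta(f,x,\varepsilon)$, we have
\[
S(B(X^*),x,\alpha) \subseteq S(B(X^*),x,1-\beta(f,x,\varepsilon)) \subseteq B(f,\varepsilon),
\]
where the last inclusion is Lemma~\ref{lem6}. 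This witnesses that $f$ is a w*-semidenting point of $B(X^*)$.

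There is no real obstacle here; the only bit of bookkeeping is the truncation $b = \min\{\beta(f,x,\varepsilon),1/2\}$, which guarantees that the slice parameter lies in $(0,1)$ as required by the definition of a slice when $\beta(f,x,\varepsilon)$ happens to be $\geq 1$. Everything else is immediate from the definitions of $\beta(f,x,t)$, $\beta(f,t)$, and the w*-semidenting property.
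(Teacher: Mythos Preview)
Your proof is correct and follows essentially the same route as the paper's own argument: both directions go through Theorem~\ref{thm0}(a) and Lemma~\ref{lem6}, and the contrapositive computation showing $\beta(f,x,t)\geq 1-\alpha$ is identical. The only difference is your extra truncation $b=\min\{\beta(f,x,\varepsilon),1/2\}$ to force the slice parameter into $(0,1)$; the paper omits this bookkeeping, so if anything your version is slightly more careful.
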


\begin{proof}
Let $f \in S(X^*)$. If for every $0 <t <1$, $\beta(f, t) >0$, then there exists $x \in S(X)$ such that $\beta(f, x, t) >0$.
So, by the above lemma, $f \in S(X^*)$ is a w*-semidenting point of $B(X^*)$. Thus, $X$ has MIP.

Conversely, let $X$ has MIP. Then, every $f \in S(X^*)$ is a w*-semidenting point of $B(X^*)$. Hence, for every $f \in S(X^*)$ and $t >0$, there exist $1 > \alpha >0$ and $x \in S(X)$ such that $S(B(X), x, \alpha) \subseteq B(f, t)$. So, if $g \in B(X^*)$ and $\|g - f\| \geq t$, then $g(x) \leq \alpha$, i.e., $0 <1-\alpha \leq 1 - g(x)$.

So, $\beta(f, x, t) = \inf\{1- g(x) : g \in B(X^*), \|f-g\| \geq t\} \geq 1-\alpha >0$. Thus, $\beta(f, t) >0$.
\end{proof}

\begin{theorem} \label{thm8}
$X$ has UMIP if and only if for every $0 <t <1$, $\beta(t) >0$.
\end{theorem}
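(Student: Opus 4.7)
The plan is to adapt the proof of Theorem~\ref{thm7} by threading uniformity through both directions, using Theorem~\ref{thm0}(b) as the working characterisation of the UMIP and Lemma~\ref{lem6} to convert lower bounds on $\beta(f,x,t)$ into w*-slices contained in norm balls.

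For the forward direction, assume $X$ has the UMIP. Given $0 < t < 1$, apply Theorem~\ref{thm0}(b) to obtain $0 < \alpha < 1$ such that for every $f \in S(X^*)$ there exists $x \in S(X)$ with $S(B(X^*), x, \alpha) \subseteq B(f, t)$. Now if $g \in B(X^*)$ satisfies $\|f - g\| \geq t$, then $g \notin B(f, t)$, so $g \notin S(B(X^*), x, \alpha)$, forcing $g(x) \leq \alpha$ and hence $1 - g(x) \geq 1 - \alpha$. Taking infimum over such $g$ gives $\beta(f, x, t) \geq 1 - \alpha$, and since $f$ was arbitrary, $\beta(t) \geq 1 - \alpha > 0$.

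For the converse, assume $\beta(t) > 0$ for all $0 < t < 1$. Given $0 < \e < 1$ (the case $\e \geq 1$ being automatic since $B(X^*)$ has diameter at most $2$), set $\beta_0 := \beta(\e) > 0$. Then for every $f \in S(X^*)$ we have $\beta(f,\e) \geq \beta_0$, so by definition of $\beta(f,\e)$ as a supremum there exists $x \in S(X)$ with $\beta(f, x, \e) \geq \beta_0 / 2 > 0$. Applying Lemma~\ref{lem6} yields
\[
S(B(X^*), x, 1 - \beta(f, x, \e)) \subseteq B(f, \e),
\]
and since $1 - \beta(f, x, \e) \leq 1 - \beta_0/2$, the monotonicity of slices in the threshold parameter gives
\[
S(B(X^*), x, 1 - \beta_0/2) \subseteq S(B(X^*), x, 1 - \beta(f, x, \e)) \subseteq B(f, \e).
\]
Thus $\alpha := 1 - \beta_0/2 \in (0,1)$ satisfies the condition of Theorem~\ref{thm0}(b), so $X$ has the UMIP.

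There is no real obstacle here; the only subtlety is keeping the inequalities straight when passing from $\beta(f, x, t) \geq \beta_0/2$ to the slice inclusion, because higher slice thresholds correspond to smaller slices. As long as one selects $x$ achieving $\beta(f, x, \e) \geq \beta_0/2$ (rather than exactly $\beta_0$, which the supremum need not attain), the argument is symmetric to that of Theorem~\ref{thm7} with $\alpha$ independent of $f$.
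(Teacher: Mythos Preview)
Your proof is correct and follows essentially the same approach as the paper's own proof: both directions use Theorem~\ref{thm0}(b) as the working characterisation, with the forward direction reading off $\beta(f,x,t)\ge 1-\alpha$ from the slice inclusion and the converse picking $x$ with $\beta(f,x,t)\ge \beta(t)/2$ and taking $\alpha = 1-\beta(t)/2$. The only cosmetic difference is that you route the converse through Lemma~\ref{lem6} plus monotonicity of slices, whereas the paper unfolds the definition directly; the content is identical.
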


\begin{proof}
Let $X$ has UMIP. Let $t >0$. There exists $0 <\alpha <1$ such that for any $f \in S(X^*)$, there exists $x \in S(X)$ such that if $g \in B(X^*)$ and $g(x) >\alpha$, then $\|f - g\| < t$. Hence, for every $f \in S(X^*)$, there exists $x \in S(X)$ such that $\beta(f, x, t) \geq 1-\alpha$. Then, $\beta(t) \geq 1-\alpha >0$.

Conversely, let $\beta(t) >0$ for every $0 <t <1$. So, for every $f \in S(X^*)$, there exists $x \in S(X)$ such that $\beta(f, x, t) \geq \beta(t)/2 >0$. Hence, $g \in S(B(X^*), x, 1- \beta(t)/2)$ implies $1 - g(x) < \beta(f, x, t)$. So,
\[
S(B(X^*), x, 1- \beta(t)/2) \subseteq B(f, t).
\]
Thus $X$ has the UMIP.
\end{proof}

\begin{remark} \label{rmk1} \rm
It is to be noted that for $0 <\e_1 <\e_2$, $\beta(\e_1) \leq \beta(\e_2)$.
\end{remark}

It is proved in \cite{WZ1} that if $X^*$ is uniformly convex, then $X$ has the UMIP. In the next theorem, we obtain a relationship between the modulus of uniform convexity and the modulus of UMIP when $X^*$ is uniformly convex.
\newpage

Following observation will be used in the proof in the next section.
\begin{observation} \label{ob2}
If $X^*$ is uniformly convex, then given any $f \in S(X^*)$ and $x \in D^{-1}(f)$,
\[
diam (S(B(X^*), x, 1-2\delta_{X^*}(t/2))) <t.
\]
\end{observation}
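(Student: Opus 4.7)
Set $\delta := \delta_{X^*}(t/2)$ and write $S := S(B(X^*), x, 1 - 2\delta)$. My plan is to establish the diameter bound first for unit-norm pairs in $S$ by combining the defining inequality of the slice with a Figiel-type monotonicity inequality for $\delta_{X^*}$, and then to extend it to arbitrary pairs in $S$ via an extreme-point reduction on the weakly compact closure $\overline{S}$.

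For unit-norm $g, h \in S \cap S(X^*)$, that is $g, h \in S(X^*)$ with $g(x), h(x) > 1 - 2\delta$, the midpoint satisfies
\[
\frac{\|g+h\|}{2} \geq \frac{(g+h)(x)}{2} > 1 - 2\delta.
\]
The standard fact that $t \mapsto \delta_{X^*}(t)/t$ is non-decreasing on $(0, 2]$ (Figiel) gives $\delta_{X^*}(t) \geq 2\delta_{X^*}(t/2) = 2\delta$, so $\|(g+h)/2\| > 1 - \delta_{X^*}(t)$, and the definition of the modulus of convexity immediately yields $\|g - h\| < t$.

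To handle arbitrary $g, h \in S$, I use that $X^*$ is reflexive and strictly convex (both being consequences of uniform convexity). The closed slice $\overline{S} := \{g \in B(X^*) : g(x) \geq 1 - 2\delta\}$ is then weakly compact, and strict convexity forces $ext(\overline{S}) \subseteq S(X^*)$. Writing $\|g-h\| = \sup_{y \in S(X)} (g-h)(y)$ and maximizing each linear functional $y \in S(X)$ separately over $\overline{S}$ (each supremum being attained at an extreme point, by a standard Bauer-type argument) yields
\[
diam(\overline{S}) = \sup\bigl\{\|g-h\| : g, h \in ext(\overline{S})\bigr\}.
\]
Combined with the unit-norm bound, this gives $diam(S) \leq diam(\overline{S}) < t$.

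The main obstacle is precisely this passage from unit-norm elements to general elements of the slice: a direct triangle-inequality argument through the normalizations $g/\|g\|$ and $h/\|h\|$ introduces an error of order $\delta$ that cannot be absorbed into the strict bound $< t$. The extreme-point reduction sidesteps this, at the cost of invoking reflexivity and strict convexity of $X^*$. A further subtle point is the \emph{strict} inequality $< t$: this uses that Figiel's inequality is strict for uniformly convex $X^*$ (equivalently, $\delta_{X^*}(\cdot)/(\cdot)$ is strictly increasing on $(0,2]$), together with the open-slice strictness $g(x) > 1 - 2\delta$.
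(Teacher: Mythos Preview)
Your argument is workable but far more elaborate than the paper's two-line proof. The paper avoids Figiel's inequality, the extreme-point reduction, and the appeal to reflexivity and strict convexity altogether, by exploiting two things you overlooked. First, the modulus-of-convexity inequality holds on the whole ball: for any $u, v \in B(X^*)$ (not just $S(X^*)$) with $\|u-v\| \geq \epsilon$ one still has $\|(u+v)/2\| \leq 1 - \delta_{X^*}(\epsilon)$; this is classical. Second, rather than comparing two generic slice elements $g, h$ to each other, the paper compares each $g \in S$ to the \emph{fixed} anchor $f$, which satisfies $f(x) = 1$ exactly. Then $\|(f+g)/2\| \geq (1 + g(x))/2 > 1 - \delta$, so the ball version of the modulus gives $\|f - g\| < t/2$ immediately, and $S \subseteq B(f, t/2)$. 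Your detour exists precisely because you compare $g$ and $h$ symmetrically and thereby discard the information $f(x) = 1$; the remainder of your proof is spent recovering the resulting factor of two via Figiel.

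There is also a soft spot in your reduction. The extreme points of the closed slice $\overline{S}$ can lie on the boundary hyperplane $\{g : g(x) = 1 - 2\delta\}$, where your step-1 bound (which used the strict inequality $g(x) > 1 - 2\delta$) does not apply. To cover them you invoke a \emph{strict} form of Figiel's inequality, $\delta_{X^*}(t) > 2\delta_{X^*}(t/2)$; this is not a standard fact and would itself require proof. That said, both your route and the paper's in full rigor deliver only $diam(S) \leq t$; the strict inequality in the stated observation is a harmless imprecision that does not affect the downstream applications.
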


\begin{proof}
Since, $X^*$ is uniformly convex, $X$ is reflexive. Let $t >0$, then $\del_{X^*}(t) >0$. Let $f \in S(X^*)$ and $x \in S(X)$ be such that $f(x)=1$.

Let $g \in B(X^*)$ be such that $g(x) >1-2\del_{X^*}(t/2)$. Then
\[
\frac{\|f+g\|}{2} \geq \frac{f(x)+g(x)}{2} > 1 - \del_{X^*}(t/2).
\]
It follows that $\|f-g\| <t/2$.
So, $diam S(B(X^*), x, 1-2\delta_{X^*}(t/2)) <t$.
\end{proof}

\begin{remark}
It should be noted from the above observation that, if $X^*$ is uniformly convex, then $\beta(t) \geq 2\del_{X^*}(t)$.
\end{remark}
\section{Stability of the UMIP under $\ell_p$ sum}

Let $1 <p, q <\infty$ be such that $1/p + 1/q = 1$. Let $\{X_i\}$ is a sequence of Banach spaces, and
\[
X_p = \left(\bigoplus_{i=1}^\infty X_i\right)_p := \left\{(x_i) : x_i \in X_i \mbox{ such that } \sum_{i=1}^{\infty} \|x_i\|^p <\infty\right\}
\]
with $\ds \|x\|_p = \left(\sum_{i=1}^{\infty} \|x_i\|^p\right)^{1/p}$ as the norm on $X_p$.

If $X_i = X$ for all $i$, we write $\ell_p(X)$ instead of $(\bigoplus_{i=1}^\infty X_i)_p$.

It is well known that
\[
(X_p)^* = X^*_q = \left(\bigoplus_{i=1}^\infty X^*_i\right)_q,
\]
with the action $\ds f(x) = \sum_{i=1}^{\infty} f_i(x_i)$.

In this section, we are going to analyse the stability of the UMIP under $\ell_p$-sum for $1 <p <\infty$. \cite[Proposition 2]{WZ1} shows that the $\ell_2$-sum of spaces with UMIP may fail UMIP. Here we prove a positive result.

Let $\beta_i(t)$ be the modulus of the UMIP for $X_i$ for all $i \in \mathbb{N}$, as defined in the previous section.

\begin{theorem} \label{thm10}
Let $1 < p <\infty$. $X_p$ has the UMIP if and only if for all $i \in \N$, $X_i$ has the UMIP and $\inf_{i \in \mathbb{N}} \beta_i(t) >0$ for every $0 <t <1$.
\end{theorem}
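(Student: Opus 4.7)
I shall use the $\beta$-modulus characterisation of the UMIP (Theorem~\ref{thm8}) together with the canonical duality $X_p^* = X_q^*$. The underlying geometric fact, used in both directions, is that near-equality in H\"older's inequality on an $\ell_p$-sum forces the sequences $(\|x_i\|^p)$ (from a unit vector in $X_p$) and $(\|g_i\|^q)$ (from an approximately norming functional in $X_q^*$) to be close as probability distributions in $\ell_1$.

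\emph{Forward direction.} Fix $i \in \N$, $f_i \in S(X_i^*)$, and $t \in (0,1)$. Embed $\tilde f := (0,\dots,f_i,\dots,0) \in S(X_q^*)$ in the $i$-th slot. By UMIP of $X_p$, pick $\tilde x = (x_j) \in S(X_p)$ with $\beta_{X_p}(\tilde f,\tilde x,t) \geq \beta_{X_p}(t)/2 =: \delta$. Embedding any $h \in B(X_i^*)$ with $\|h-f_i\| \geq t$ as $\tilde h$ in the $i$-th slot gives $\|\tilde h-\tilde f\|_q = \|h-f_i\| \geq t$, whence
\[
1 - h(x_i) \;=\; 1 - \tilde h(\tilde x) \;\geq\; \delta.
\]
The crucial quantitative step is to show that the witness $\tilde x$ may be chosen so that $\|x_i\|$ is close to $1$. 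Taking any $g \in B(X_q^*)$ with $g(\tilde x) > 1 - \delta/2$ (possible since $\sup_{g \in B(X_q^*)} g(\tilde x) = 1$), the defining property of $\beta$ gives $\|g-\tilde f\|_q < t$, so $(\sum_{j \neq i}\|g_j\|^q)^{1/q} < t$; bounding $\sum_{j \neq i} g_j(x_j) \leq t(1 - \|x_i\|^p)^{1/p}$ by H\"older yields $\|x_i\| + t(1-\|x_i\|^p)^{1/p} > 1 - \delta/2$. Applying the same argument with UMIP at an auxiliary scale $t' < t$ pins $\|x_i\|$ sufficiently close to $1$, after which normalising $y := x_i/\|x_i\| \in S(X_i)$ gives $\beta_i(f_i, y, t) \geq (\|x_i\| - 1 + \delta)/\|x_i\| > 0$. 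The resulting bound is uniform in $i$ and $f_i$, yielding both the UMIP of each $X_i$ and $\inf_i \beta_i(t) > 0$.

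\emph{Backward direction.} Let $c(s) := \inf_i \beta_i(s) > 0$. Given $f = (f_i) \in S(X_q^*)$ and $t \in (0,1)$, I choose $s > 0$ small (depending on $t$ and $c(\cdot)$). For each $i$ with $f_i \neq 0$, set $\hat f_i := f_i/\|f_i\|$ and use UMIP of $X_i$ to pick $x_i^{(0)} \in S(X_i)$ with $\beta_i(\hat f_i, x_i^{(0)}, s) \geq c(s)/2$. Define $x_i := \|f_i\|^{q-1} x_i^{(0)}$ (and $0$ otherwise). Since $p(q-1) = q$, $\|x\|_p^p = \sum \|f_i\|^q = 1$, so $x \in S(X_p)$. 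For $g = (g_i) \in B(X_q^*)$ with $g(x) > 1 - \delta$, the chain
\[
1 - \delta \;<\; g(x) \;=\; \sum \|f_i\|^{q-1} g_i(x_i^{(0)}) \;\leq\; \sum \|f_i\|^{q-1}\|g_i\| \;\leq\; 1
\]
(the last bound is H\"older, using $\sum \|f_i\|^{p(q-1)} = 1$) forces both chained inequalities to be within $\delta$ of equality. From the first, $g_i(x_i^{(0)}) \approx \|g_i\|$ on coordinates where $\|f_i\|$ is not too small; the defining property of $x_i^{(0)}$ then yields $\|g_i/\|g_i\| - \hat f_i\| < s$ on such coordinates. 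From the second (quantitative H\"older stability for $\ell_p$), $(\|g_i\|^q)$ and $(\|f_i\|^q)$ are close as $\ell_1$-sequences, so $|\|g_i\| - \|f_i\||$ is small per coordinate and the tail (coordinates where $\|f_i\|$ is too small to invoke UMIP of $X_i$) contributes negligibly. Combining via $\|g_i - f_i\| \leq \|g_i\|\,\|g_i/\|g_i\| - \hat f_i\| + \bigl|\|g_i\| - \|f_i\|\bigr|$ and summing in $\ell_q$ yields $\|g - f\|_q < t$ for $s,\delta$ chosen small enough, proving $\beta_{X_p}(f, x, t) \geq \delta > 0$.

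The main obstacle is the quantitative H\"older-stability estimate underpinning both directions. In the backward direction, the hypothesis $\inf_i \beta_i(s) > 0$ is exactly what makes the per-coordinate UMIP bounds uniform in $i$, allowing them to be summed into a global estimate on $X_p$; in the forward direction, the analogous difficulty is ensuring that the UMIP witness $\tilde x$ for $X_p$ is sufficiently concentrated on the coordinate supporting the embedded functional $\tilde f$.
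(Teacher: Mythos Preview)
Your construction of the witness $x_i = \|f_i\|^{q-1}x_i^{(0)}$ in the backward direction and the embedding $\tilde f$ in the forward direction match the paper exactly, so the architecture is right. But both directions stall precisely at the ``obstacles'' you name, and neither is resolved.

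\textbf{Forward direction.} The auxiliary-scale argument does not close. At scale $t'$ you get a witness $\tilde x$ with $\beta$-bound $\delta' = \beta_{X_p}(t')/2$; taking $g\in D(\tilde x)$ (so $\|g_j\|^q=\|x_j\|^p$) and using $\|g-\tilde f\|<t'$ gives $\|x_i\|^p>1-(t')^q$. For your normalisation step you then need $\|x_i\|>1-\delta'$, i.e.\ essentially $(t')^q \lesssim \beta_{X_p}(t')$. Nothing in the hypotheses prevents $\beta_{X_p}(t')$ from decaying faster than $(t')^q$, so no choice of $t'$ is guaranteed to work. The paper avoids this entirely by switching to the \emph{quasicontinuity} characterisation of UMIP from \cite[Theorem~2.11]{BGG}: one shows directly that the $\delta$ working for $\e/2$ in $X_p$ works for $\e$ in each $X_k$, and the concentration $\|x_k\|^p>1-(\e/2)^q$ falls out of the explicit form of $D(\tilde x)$ without any bootstrap on scales.

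\textbf{Backward direction.} ``Quantitative H\"older stability'' is the whole difficulty, and you do not supply it. The paper breaks it into two concrete steps. First, the case $\|g_i\|=\|f_i\|$ for all $i$: with $a_i=\|f_i-g_i\|$, $b_i=\|f_i\|$, one partitions $\N$ into $A=\{i:a_i/b_i>t/4\}$ and $B=\N\setminus A$ (Day's trick \cite{Da}); the $B$-part contributes at most $(t/4)^q$ to $\|f-g\|_q^q$, forcing $\sum_{A}b_i^q\ge(3t/8)^q$, and on $A$ the per-coordinate bound $g_i(x_i^{(0)})\le(1-\beta_0(t/4))b_i$ yields $g(z)\le 1-(3t/8)^q\beta_0(t/4)$. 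Second, the general case is reduced to the first using the uniform convexity of $\ell_q$: if $g(z)>1-\delta_q(\alpha/2)$ then the scalar sequences $(\|f_i\|)$ and $(\|g_i\|)$ lie in an $\ell_q$-slice of diameter $<\alpha$ (Observation~\ref{ob2}), so $h_i:=g_i\|f_i\|/\|g_i\|$ satisfies $\|g-h\|_q<\alpha$ and $\|h_i\|=\|f_i\|$, landing in the first case. Your single-step outline does not indicate how either of these estimates would be obtained.
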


\begin{proof}
Let $X_p$ has the UMIP. Fix $k \in \mathbb{N}$. We show that $X_k$ has the UMIP.

We use the uniformly quasicontinuous characterisation of UMIP \cite[Theorem 2.11]{BGG}.

Let $\e > 0$. Since $X_p$ has the UMIP, there exists $\del > 0$ such that for any $f \in S(X_q^*)$, there is $x \in S(X_p)$ such that if $y \in S(X_p)$ and $\|x-y\|_p < \del$, then $\|f-f_y\|_q <\e/2$ for all $f_y \in D(y)$.

Let $f_k \in S(X_k^*)$. Define $f = (f_i) \in S(X_q^*)$ by putting
\[
f_i = \left\{\begin{array}{ll} f_k & \mbox{if \, \, } i = k \\ 0 & \mbox{otherwise}\end{array} \right.
\]
By the above, there is $x \in S(X_p)$ such that if $y \in S(X_p)$ and $\|x-y\|_p < \del$, then $\|f-f_y\|_q < \e/2$ for all $f_y \in D(y)$. In particular, if $g \in D(x)$ then $\|f-g\|_q < \e/2$.

Let $z_i = x_i/\|x_i\|$ for all $i$ such that $x_i \neq 0$ and $g=(g_i)\in D(x)$.
Then, 
\beqa
1 & = & \sum_{i}^{\infty} g_i(x_i) \\
& \leq & \sum_{i}^{\infty} \|g_i\|\|x_i\| \\
  & \leq & \left(\sum_{i}^{\infty}\|g_i\|^q\right)^{1/q}\left(\sum_{i}^{\infty}\|x_i\|^p\right)^{1/p} \mbox{ (using Holder's inequality)}\\
  & = & 1.
\eeqa
So, for each $i \in \N$, $g_i(x_i) = \|g_i\|\|x_i\|$. Also, the equality case of Holder's inequality gives us that $\|x_i\|^p = \|g_i\|^q$.
Combining the two, we obtain that $\left(\frac{g_i}{\|x\|^{p/q}}\right)\left(\frac{x_i}{\|x_i\|}\right) = 1$.
Thus, $g$ has the form
\[
g_i = \left\{\begin{array}{ll} \|x_i\|^{p/q} h_i & \mbox{if \, \, } x_i \neq 0 \\ 0 & \mbox{otherwise}\end{array} \right.
\]
where $h_i \in D(z_i)$. Let $\ds \gamma_k = \sum_{i \neq k}\|x_i\|^p$. It follows that $1 = \|x\|_p^p = \|x_k\|^p + \gamma_k$. And
\[
(\e/2)^q > \sum_{i=1}^{\infty} \|f_i - g_i\|^q = \left\|f_k - \|x_k\|^{p/q} h_k\right\|^q + \gamma_k \geq \left(1 - \|x_k\|^{p/q}\right)^q + \gamma_k.
\]
It follows that $\gamma_k < (\e/2)^q$. Therefore, $\|x_k\|^p >1-(\e/2)^q$.

Let $y_k \in S(X_k)$ be such that $\|y_k-z_k\| <\del$ and $h_k \in D(y_k)$. Let $u \in S(X_p)$ be defined as
\[
u_i = \left\{\begin{array}{ll} \|x_k\|y_k & \mbox{if \, \, } i = k \\ x_i & \mbox{otherwise}\end{array}\right.
\]
We have $u \in S(X_p)$ and $\|x-u\|_p = \|x_k\| \|y_k - z_k\| <\del$. Consider $g=(g_i)$ where
\[
g_i = \left\{\begin{array}{ll} \|x_k\|^{p/q}h_k & \mbox{if \, \, } i = k \\ \|x_i\|^{p/q}f_{z_i} & \mbox{otherwise}\end{array}\right.
\]
Here $f_{z_i} \in D(z_i)$. Then $g \in D(u)$. So, as before
\[
(\e/2)^q > \|f-g\|^q_q = \left\|f_k - \|x_k\|^{p/q} h_k\right\|^q + \gamma_k.
\]
Now,
\beqa
\|f_k-h_k\| & \leq & \|f_k - \|x_k\|^{p/q}h_k\| + \left\|\|x_k\|^{p/q}h_k - h_k\right\|\\
& \leq & \e/2 + (1-\|x_k\|^{p/q})\\
& < & \e/2 + (1-(1-(\e/2)^q)^{1/q})
\leq \e/2 + \e/2 = \e.
\eeqa

Hence, $X_k$ has the UMIP and the $\delta$ that works for $\e/2$ for $X_p$ works for $\e$ for $X_k$. By \cite[Theorem 2.11]{BGG}, this implies $\inf_{i \in \mathbb{N}} \beta_i(t) >0$.

Conversely, for any $t >0$, let $\beta_0(t)=\inf_{i} \beta_i(t)$. Clearly, $\beta_0(t) >0$ for any $0 <t < 1$ and for $0 <t_1 <t_2$, $\beta_0(t_1) \leq \beta_0(t_2)$ (using Remark~\ref{rmk1}).

Let $f=(f_i) \in S(X^*_q)$. If $f_i \neq 0$ then $f_{i}/\|f_{i}\| \in S(X_{i}^*)$. So, there exists $x_{i} \in S(X_{i})$ such that
\[
h_{i} \in \{g_{i} \in B(X_{i}^*) : g_{i}(x_{i}) > 1 -\beta_0(t)\} \implies \left\|h_{i}-\frac{f_{i}}{\|f_{i}\|}\right\| < t.
\]

Define $z = (z_i) \in S(X_p)$ by
\[
z_i = \left\{\begin{array}{ll} \|f_i\|^{q/p} x_i & \mbox{if \, \, } f_i \neq 0\\ 0 & \mbox{if \, \, } f_i = 0 \end{array} \right.
\]

Let $g=(g_i) \in B(X^*_q)$ be such that $\|g-f\|_q \geq t$.
First, we deal with the case $\|g_i\| = \|f_i\|$ for all $i \in \N$.
Let $a_i = \|f_i-g_i\|$ and $b_i = \|f_i\| = \|g_i\|$. We follow the proof in \cite[Theorem 3]{Da}. Clearly, $\|f_i - g_i\| \leq \|f_i\|+\|g_i\|$, i.e., $a_i \leq 2b_i$.
Now, Let $A:=\{i \in \N : a_i/b_i >t/4\}$ and $B:= \N \setminus A$. Then,
\[
1 = \sum_{i=1}^{\infty} b_i^q \geq \sum_{i \in B} b_i^q \geq (4/t)^q \sum_{i \in B} a_i^q.
\]
That is,
\[
\sum_{i \in B} a_i^q \leq \left(t/4\right)^q.
\]

Thus,
\[
\sum_{i \in A} a_i^q = \sum_{i \in \N} a_i^q - \sum_{i \in B} a_i^q \geq t^q - (t/4)^q \geq \left(3t/4\right)^q.
\]
Hence,
\[
\alpha_0=\sum_{i \in A} b_i^q \geq (3t/8)^q.
\]

Since $\|f_i-g_i\| = a_i$ and $\|f_i\| = b_i$, we have $g_i(x_i) \leq (1-\beta_0(a_i/b_i)) b_i$. Therefore,
\beqa
g(z) & = & \sum_{i=1}^{\infty} \|f_i\|^{q/p} g_i(x_i) \leq \sum_{i=1}^{\infty} b_i^q (1-\beta_0(a_i/b_i))\\
& = & \sum_{i \in A} b_i^q(1-\beta_0(a_i/b_i))) + \sum_{i \in B} b_i^q (1-\beta_0(a_i/b_i)))\\
& \leq & (1-\beta_0(t/4)) \sum_{i \in A} b_i^q + \sum_{i \in B} b_i^q \mbox{ (follows from Remark ~\ref{rmk1})}\\
& = & (1-\beta_0(t/4))\alpha_0 + (1-\alpha_0)
= 1 - \alpha_0\beta_0(t/4)
\leq 1 -(3t/8)^q\beta_0(t/4).
\eeqa

It follows that for any $f \in S(X^*_q)$, there is $z \in S(X_p)$ with the property that, if $g \in B(X^*_q)$ is such that $\|g_i\|=\|f_i\|$ for all $i \in \N$ and $g(z) >1 - (3t/8)\beta_0(t/4)$, then $\|f-g\|_q <t$.

Now, we move to the general case.
We know that $\ell_q$ is uniformly convex. Let $\del_q(t)$ for $0 <t \leq 2$ be the modulus of convexity for $\ell_q$.

Choose $\alpha$ such that $0 <\alpha <t/2$ and $\del_q(\alpha/2)+\alpha <(3t/16)\beta_0(t/8)$.

Let $g \in B(X^*_q)$ be such that $g(z) > 1 - \del_q(\alpha/2)$.
So,
\[
\sum_{i=1}^{\infty} \|f_i\|^{q/p} \|g_i\| \geq \sum_{i=1}^{\infty} \|f_i\|^{q/p} |g_i(x_i)| > 1 - \del_q(\alpha/2).
\]

Let $F := (\|f_i\|) \in S(\ell_q)$, $G := (\|g_i\|) \in B(\ell_q)$ and $u := (\|f_i\|^{q/p}) \in S(\ell_p)$. Then,
\[
F(u) = \sum_{i=1}^{\infty} \|f_{i}\| \|f_i\|^{q/p} = 1 \quad \mbox{and} \quad G(u) > 1 - \del_q(\alpha/2).
\]
Now, by Observation ~\ref{ob2},
\[
\|F-G\|_q = \left(\sum_{i=1}^{\infty} \bigg|\|g_i\| - \|f_i\|\bigg|^q\right)^{1/q} < \alpha.
\]

Let $h=(h_i)$, where, $h_i = \frac{g_i\|f_i\|}{\|g_i\|}$. Clearly, $\|h_i\|=\|f_i\|$ for each $i$. And
\[
\|g-h\|_q = \left(\sum_{i=1}^{\infty} \bigg|\|g_i\| - \|f_i\|\bigg|^q\right)^{1/q} < \alpha.
\]

Since $g(z) >1-\del_q(\alpha/2)$, we have $h(z) \geq g(z) -\|g-h\|_q > 1-\del_q(\alpha/2) - \alpha >1 - (3t/16)\beta_0(t/8)$, by the choice of $\alpha$. By the first case,
\[
\|f-h\|_q <t/2.
\]
Hence,
\[
\|f-g\|_q \leq \|f-h\|_q + \|h-g\|_q <t.
\]
\end{proof}

\begin{corollary}
Let $1 < p <\infty$. Then $\ell_p (X)$ has the UMIP if and only if $X$ has the UMIP.
\end{corollary}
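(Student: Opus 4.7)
The plan is to derive this corollary as a direct specialization of Theorem~\ref{thm10} to the case where all summands coincide with a single space $X$, using Theorem~\ref{thm8} to handle the uniform lower bound on the moduli.

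For the forward direction, suppose $\ell_p(X)$ has the UMIP. Since $\ell_p(X) = X_p$ with $X_i = X$ for all $i$, Theorem~\ref{thm10} immediately gives that each $X_i = X$ has the UMIP. (The condition $\inf_i \beta_i(t) > 0$ is not needed here; the conclusion that any single $X_i$ has the UMIP is already delivered by Theorem~\ref{thm10}.)

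For the converse, suppose $X$ has the UMIP. Since $X_i = X$ for all $i$, the moduli satisfy $\beta_i(t) = \beta(t)$ for every $i \in \N$ and every $0 < t < 1$. By Theorem~\ref{thm8}, the UMIP for $X$ gives $\beta(t) > 0$ for each $0 < t < 1$, so
\[
\inf_{i \in \N} \beta_i(t) = \beta(t) > 0 \quad \text{for every } 0 < t < 1.
\]
Both hypotheses of the sufficiency direction of Theorem~\ref{thm10} are therefore satisfied, and we conclude that $\ell_p(X) = X_p$ has the UMIP.

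There is no real obstacle here: the entire content of the corollary is packaged inside Theorem~\ref{thm10}, and the only small observation needed is that when the summands are all equal the infimum $\inf_i \beta_i(t)$ trivially equals the common modulus $\beta(t)$, which is positive by Theorem~\ref{thm8}. Thus the proof is essentially a one-line invocation in each direction.
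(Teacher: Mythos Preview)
Your proposal is correct and matches the paper's intended approach: the corollary is stated without proof, being an immediate specialization of Theorem~\ref{thm10} once one observes that, for $X_i = X$ for all $i$, the infimum $\inf_i \beta_i(t)$ collapses to $\beta(t)$, which is positive by Theorem~\ref{thm8}.
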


\textbf{Acknowledgements}
The author is indebted to his Ph.D. supervisor Prof. Pradipta Bandyopadhyay for the regular discussions and immense help in developing the article to its current form, Late Prof. Sudipta Dutta and Bor-Luh Lin whose work sets the background for the current work and Prof. Gilles Lancien for giving his valuable time, fruitful discussions and warm hospitality during a recent visit to Besan\c{c}on.

\bibliographystyle{amsplain}

\end{document}